\theoremstyle{plain}
\newtheorem{theorem}{Theorem}[section]
\newtheorem{prepos}[theorem]{Proposition}
\theoremstyle{definition}
\newtheorem{definition}[theorem]{Definition}
\newtheorem{remark}[theorem]{Remark}
\def\sgn{\mathop{\rm sign}\nolimits}
\def\sgn{\mathop{\rm sign}\nolimits}
\def\sgn{\mathop{\rm sign}\nolimits}
\begin{document}

\title{
On the spectra of Schwarz matrices with certain sign patterns}


\author{Mikhail Tyaglov\thanks{Supported by the European Research Council under the
European Union’s Seventh Framework Programme (FP7/2007-2013)/ ERC grant agreement no.~259173.}\\
\small Technische Universit\"at  Berlin, Institut f\"ur Mathematik,\\
\small MA 4-5, Strasse des 17. Juni 136, 10623, Berlin, Germany\\
Email: {\tt tyaglov@math.tu-berlin.de}}

\date{\small \today}

\maketitle

\vspace{5mm}

\textbf{Key words.} Schwarz matrices, inverse spectral problem, generalized Hurwitz polynomials, continued fractions, tridiagonal matrices.

\vspace{2mm}

\textbf{AMS subject classification.} 15A29, 47A75, 47B36, 26C10.

\vspace{5mm}

\begin{abstract}
The direct and inverse spectral problems are solved for a wide subclass of the class of Schwarz matrices. A connection between the Schwarz matrices and the so-called generalized Hurwitz polynomials is found. The known results due to H.~Wall and O.~Holtz are briefly reviewed and obtained as particular cases.
\end{abstract}


\setcounter{equation}{0}

\section{Introduction}\label{section:intro}

In this work,  we consider the matrices of the form
\begin{equation}\label{main.matrix.intro}
\left[\begin{array}{cccccc}
    -b_0 & 1 &  0 &\dots&   0   & 0 \\
    -b_1 & 0 &1 &\dots&   0   & 0 \\
     0  &-b_2 & 0 &\dots&   0   & 0 \\
    \vdots&\vdots&\vdots&\ddots&\vdots&\vdots\\
     0  &  0  &  0  &\dots&0& 1\\
     0  &  0  &  0  &\dots&-b_{n-1}&0\\
\end{array}\right],\qquad b_k\in\mathbb{R}\backslash\{0\}
\end{equation}
that usually called the \emph{Schwarz matrices}\footnote{Schwarz himself~\cite{Schwarz} considered also the matrices whose $(1,1)$th entry is zero while $(n,n)$th entry is nonzero. Sometimes such matrices are called the Schwarz matrices as well (see e.g.~\cite{Elsner_Hersh}).}. We solve direct and inverse problems for such matrices with certain sign patterns.

These matrices are well-studied from the matrix theory point of view (see e.g.~\cite{Chen_Chu,Puri_Weygandt,Datta1,Datta2,Datta3,Datta4} and references there). Here we use the method due to Wall~\cite{Wall'45,Wall} to solve the inverse spectral problem and our results on the generalized Hurwitz polynomials to solve the direct spectral problem for the Schwarz matrices~(\ref{main.matrix.intro}) with a wide class of sign patterns. The case of all $b_k$ positive
was considered by H.\,Wall~\cite{Wall'45} and later by H.\,Schwarz~\cite{Schwarz} and many other authors. The case of all $b_k$ negative was considered by O.\,Holtz~\cite{H4}. Here we use formul\ae~obtained by Wall in~\cite{Wall'45} which connect the entries of the matrix~(\ref{main.matrix.intro}) with coefficients of its characteristic polynomial (see formul\ae~(\ref{Formulae.1}) below) to use the so-called generalized Hurwitz theorem established in~\cite{Tyaglov.gen.Hurw}.

In Section~\ref{section:Wall.and.Schwarz}, we review results due to Wall that was obtained in~\cite{Wall'45}. Section~\ref{section:solved} is devoted to all solved direct and inverse problems for the Schwarz matrices. In Section~\ref{section:generalized.Hurwitz}, we recall some basic facts on the generalized Hurwitz polynomials established in~\cite{Tyaglov.gen.Hurw}. In Section~\ref{section:spectral.problems}, we prove our main theorems on the direct and
inverse problems for Schwarz matrices with certain sign patterns. Finally, in Section~\ref{section:examples}, we apply our results of Section~\ref{section:spectral.problems}
to matrices~(\ref{main.matrix.intro}) with one sign change in the sequence $b_1,\ldots,b_{n-1}$.
In particular, we prove the direct and inverse problems for the matrices~(\ref{main.matrix.intro}) with $b_1>0$, $b_2,\ldots,b_{n-1}<0$ which was considered in~\cite{Bebiano'11}.

\setcounter{equation}{0}

\section{Wall's continued fractions and the Schwarz matrices}\label{section:Wall.and.Schwarz}

Given a monic real polynomial
\begin{equation}\label{main.poly}
p(z)=z^n+a_1z^{n-1}+\cdots+a_n,
\end{equation}
we represent it as follows
$$
p(z)=p_0(z^2)+zp_1(z^2),
$$
where the polynomials $p_0(u)$ and $p_1(u)$ are the even and odd parts of the polynomial~$p$, respectively:
\begin{equation}\label{even.part}
p_0(u)=a_n+a_{n-2}u+a_{n-4}u^2+\cdots,
\end{equation}
\begin{equation}\label{odd.part}
p_1(u)=a_{n-1}+a_{n-3}u+a_{n-5}u^2+\cdots
\end{equation}
We also introduce the polynomial $q$ as follows
\begin{equation}\label{poly.q}
q(z)=
\left\{\begin{array}{l}
p_0(z^2)\quad\,\,\,\mathrm{if}\quad n=2l+1,\\
zp_1(z^2)\quad\mathrm{if}\quad n=2l.
\end{array}\right.
\end{equation}

Let us associate with the polynomial $p$ the following determinants called the \textit{Hurwitz determinants}:
\begin{equation}\label{delta}
\Delta_{j}(p)=
\det\left[ \begin{array}{cccccc}
a_1&a_3&a_5&a_7&\dots&a_{2j-1}\\
1&a_2&a_4&a_6&\dots&a_{2j-2}\\
0  &a_1&a_3&a_5&\dots&a_{2j-3}\\
0  &1&a_2&a_4&\dots&a_{2j-4}\\
\vdots&\vdots&\vdots&\vdots&\ddots&\vdots\\
0  &0  &0  &0  &\dots&a_{j}
\end{array}
\right],\quad j=1,\ldots,n,
\end{equation}
where we set $a_i\equiv0$ for $i>n$.

In 1945, H.\,Wall established~\cite{Wall'45} (see also~\cite{Wall}) the following theorem.
\begin{theorem}[Wall]\label{Theorem.Wall}
If the coefficients of the polynomial $p$ given in~(\ref{main.poly}) satisfy the inequalities
\begin{equation}\label{Hurw.det.ineq}
\Delta_j(p)\neq0,\qquad j=1,\ldots,n,
\end{equation}
then there is determined uniquely a continued fraction of the form
\begin{equation}\label{Wall.cont.frac}
\displaystyle\frac{q(z)}{p(z)}=\frac{b_0}{z+b_0+\displaystyle\frac{b_1}{z+\displaystyle\frac{b_2}{\ddots+\displaystyle\frac{b_{n-1}}{z}}}}
\end{equation}
where $q$ is defined in~(\ref{poly.q}) and the real coefficients $b_k$ are given by the formul\ae
\begin{equation}\label{Formulae.1}
\begin{array}{l}
b_0=\Delta_1(p),\\
 \\
\displaystyle b_k=\frac{\Delta_{k-2}(p)\Delta_{k+1}(p)}{\Delta_{k-1}(p)\Delta_k(p)},\qquad k=1,\ldots,n-1,
\end{array}
\end{equation}
where $\Delta_{-1}(p)=\Delta_0(p)\equiv1$.

Conversely, the coefficients in the last denominator of a continued fraction of the form~(\ref{Wall.cont.frac}) satisfy the inequalities~(\ref{Hurw.det.ineq}).
\end{theorem}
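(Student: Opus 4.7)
The plan is to prove both assertions by induction on the degree $n=\deg p$, matching the continued-fraction extraction algorithm with the recursive structure of the Hurwitz determinants. The base case $n=1$ is immediate: $p(z)=z+a_1$ and $q(z)=a_1=\Delta_1(p)$ give the continued fraction $a_1/(z+a_1)$ with $b_0=\Delta_1(p)$.

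For the inductive step, I would first extract $b_0$. A direct count of degrees and leading coefficients in the two parity cases $n=2l+1$ and $n=2l$ shows that $\deg q=n-1$ with leading coefficient $a_1=\Delta_1(p)$, so $q(z)/p(z)\sim a_1/z$ as $|z|\to\infty$, while the candidate continued fraction satisfies $b_0/(z+b_0+\cdots)\sim b_0/z$. This forces $b_0=\Delta_1(p)$. Next I form the residual $R_1(z):=b_0\,p(z)/q(z)-z-b_0=(b_0p-(z+b_0)q)/q$ and compute the numerator $b_0p-(z+b_0)q$ directly in both parities; I expect to see that it is divisible by $z$, has degree at most $n-2$, and has leading coefficient precisely $\Delta_2(p)=a_1a_2-a_3$. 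Matching the asymptotics on the next layer, $R_1=b_1/(z+\cdots)\sim b_1/z$, then yields $b_1=\Delta_2(p)/\Delta_1(p)$, in agreement with \reff{Formulae.1}.

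The next move is to realise $R_1$ as a ratio $\tilde q/\tilde p$ with $\tilde p$ a real polynomial of degree $n-1$ obtained from $p$ by one step of the Routh--Hurwitz reduction, and to establish a clean identity relating $\Delta_j(\tilde p)$ to $\Delta_j(p)$. Applying the inductive hypothesis to $\tilde p$ supplies the tail of the continued fraction, and combining the induction formula for the $\tilde b_k$ with the relation between the two families of determinants yields the formula \reff{Formulae.1} for $b_k$ in terms of $\Delta_j(p)$. Uniqueness of the whole expansion is automatic, since each coefficient is determined by the leading asymptotic of the corresponding remainder. For the converse, given nonzero $b_0,\ldots,b_{n-1}$ one unwinds the continued fraction to reconstruct a polynomial $p$; solving the relations \reff{Formulae.1} in reverse expresses each $\Delta_j(p)$ as a nonzero product of the $b_k$, so every Hurwitz determinant is nonzero.

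The main obstacle will be the determinant bookkeeping in the reduction step: writing $\tilde p$ explicitly and proving the identity between $\Delta_j(\tilde p)$ and $\Delta_j(p)$ is essentially the classical fact that one step of the Routh algorithm shifts the Hurwitz determinants by fixed scaling factors involving $a_1$. Once this identity is set up carefully, the induction should run on its own, and both the forward direction (existence of the continued fraction with the announced coefficients) and the converse (nonvanishing of the Hurwitz determinants) follow in parallel.
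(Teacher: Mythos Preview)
The paper does not prove Theorem~\ref{Theorem.Wall}; it quotes it as Wall's result with a reference to~\cite{Wall'45,Wall} and then uses it. What the paper does supply, immediately afterward, is the \emph{algorithmic} side of the statement: the Sturm-type recursion $b_{k+1}f_{k+2}=f_k-zf_{k+1}$ starting from $f_0=p$, $f_1=q/b_0$, which is exactly the step-by-step extraction of the continued fraction that your induction formalises. So your plan is not being compared against a proof in this paper, but it is faithful to the method the paper attributes to Wall and Schwarz.

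As a plan your induction is sound. The base case and the identification $b_0=\Delta_1(p)$ from the leading asymptotics are correct. The reduction step you outline---replace $(p,q)$ by the next pair and relate the Hurwitz determinants of the reduced polynomial $\tilde p$ to those of $p$---is indeed the heart of the matter, and the identity you need is the classical Routh reduction: if one sets $\tilde a_j=a_{j+1}-a_1^{-1}a_{j+2}$ (with appropriate parity conventions), then $\Delta_j(\tilde p)=a_1^{-\lceil j/2\rceil}\Delta_{j+1}(p)$, from which \reff{Formulae.1} follows by telescoping. Once that determinant identity is written down and verified (a row/column manipulation on the Hurwitz matrix), the induction runs exactly as you say. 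For the converse, inverting \reff{Formulae.1} gives $\Delta_j(p)=\prod_{i=0}^{j-1}b_i^{\,\lceil (j-i)/2\rceil}$ up to the obvious index bookkeeping, so nonvanishing of all $b_k$ forces $\Delta_j(p)\neq0$; this matches the paper's later use of \reff{Formulae.1} in the opposite direction (e.g.\ in~\reff{Formulae.odd}--\reff{Formulae.even}).

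One point to tighten: you should verify, at the inductive step, that the reduced polynomial $\tilde p$ again has all its Hurwitz determinants nonzero---this is immediate from the determinant identity above, but it is what licenses the inductive call. With that check in place your argument is complete and is essentially Wall's original proof.
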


From the form of the continued fraction~(\ref{Wall.cont.frac}) it is easy to see that
$$
\displaystyle \frac{q(z)}{p(z)}=b_0\,((zE_n-J_n)^{-1}e_1,e_1),
$$
where $e_1$ is the first coordinate vector in $\mathbb{R}^{n}$, $E_n$ is the $n\times n$ unity matrix, and
\begin{equation}\label{main.matrix}
J_n=
\left[\begin{array}{cccccc}
    -b_0 & 1 &  0 &\dots&   0   & 0 \\
    -b_1 & 0 &1 &\dots&   0   & 0 \\
     0  &-b_2 & 0 &\dots&   0   & 0 \\
    \vdots&\vdots&\vdots&\ddots&\vdots&\vdots\\
     0  &  0  &  0  &\dots&0& 1\\
     0  &  0  &  0  &\dots&-b_{n-1}&0\\
\end{array}\right],
\end{equation}
where the nonzero real entries $b_k$ are exactly the coefficients of the continued fraction~(\ref{Wall.cont.frac}). In other words, the polynomial $p(z)$ is the characteristic polynomial of the matrix $J_n$, while the polynomial $q(z)/b_0$ is the characteristic polynomial
of the principal submatrix of the matrix $J_n$ obtained by deleting the first column and the
first row. Thus, we come to the following conclusion.
\begin{theorem}\label{Theorem.Schwarz.matrix.Wall}
The characteristic polynomial $p$ of the matrix $J_n$ defined in~(\ref{main.matrix}) satisfies
the inequalities~(\ref{Hurw.det.ineq}). Conversely, for every real polynomial $p$ satisfying the inequalities~(\ref{Hurw.det.ineq}), there exists a unique matrix of the form~(\ref{main.matrix}) whose characteristic polynomial is $p$.
\end{theorem}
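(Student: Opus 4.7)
The plan is to derive both directions directly from Wall's Theorem~\ref{Theorem.Wall} combined with the matrix--continued-fraction identity
$$
\frac{q(z)}{p(z)}=b_0\,((zE_n-J_n)^{-1}e_1,e_1)
$$
displayed just before the statement. This identity will be the bridge between the matrix $J_n$ (sitting in~(\ref{main.matrix})) and the continued fraction~(\ref{Wall.cont.frac}) whose coefficients encode the Hurwitz determinants of the characteristic polynomial~$p$.

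For the forward direction, I start from a Schwarz matrix $J_n$ with entries $b_0,b_1,\ldots,b_{n-1}$, all nonzero by assumption. Let $p$ be its characteristic polynomial and $q$ the polynomial defined in~(\ref{poly.q}); the $(1,1)$-entry identity above expresses $q/p$ as a continued fraction of exactly the form~(\ref{Wall.cont.frac}) with the given nonzero $b_k$. This is precisely a continued fraction of Wall's type, so the converse part of Theorem~\ref{Theorem.Wall} applies and yields the Hurwitz inequalities $\Delta_j(p)\neq0$ for $j=1,\ldots,n$.

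For the converse direction, let $p$ be a real polynomial satisfying $\Delta_j(p)\neq0$ for $j=1,\ldots,n$. By the direct part of Theorem~\ref{Theorem.Wall}, there exist unique real numbers $b_0,\ldots,b_{n-1}$ given by~(\ref{Formulae.1}) such that $q(z)/p(z)$ has the continued fraction expansion~(\ref{Wall.cont.frac}); the formul\ae~(\ref{Formulae.1}) together with $\Delta_j(p)\neq0$ ensure $b_k\neq0$ for all $k$. Assemble these $b_k$ into a matrix of the form~(\ref{main.matrix}), call it $J_n$. The identity recalled above then shows that the characteristic polynomial of $J_n$ coincides (up to its leading coefficient, which is already monic) with $p$, since $q$ and $p$ are determined from each other by taking even/odd parts as in~(\ref{even.part})--(\ref{poly.q}). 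Uniqueness of $J_n$ follows at once from the uniqueness of the coefficients $b_k$ in Wall's expansion~(\ref{Wall.cont.frac}).

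There is no real obstacle in this proof beyond citing Wall's theorem correctly and verifying that the matrix--continued-fraction identity produces a fraction of exactly the shape~(\ref{Wall.cont.frac}); the mild point to check carefully is that recovering $p$ from the pair $(q,p)$ in the form~(\ref{poly.q}) respects the parity conventions used to define $q$, so that the characteristic polynomial of the constructed $J_n$ is the given $p$ and not some companion related to it.
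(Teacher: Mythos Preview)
Your proposal is correct and follows essentially the same approach as the paper: the theorem is deduced directly from Wall's Theorem~\ref{Theorem.Wall} via the matrix--continued-fraction identity $\frac{q(z)}{p(z)}=b_0\,((zE_n-J_n)^{-1}e_1,e_1)$, with the forward direction using the converse half of Wall's theorem and the converse direction using its direct half together with formul\ae~(\ref{Formulae.1}). The paper presents this reasoning as a short paragraph of discussion immediately preceding the theorem statement rather than as a formal proof, but the logical content is the same as yours.
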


The matrices of the form~(\ref{main.matrix}) are called the \emph{Schwarz matrices} after H.~Schwarz\footnote{Schwarz considered matrices $(n,n)$th nonzero entries rather than $(1,1)$th as we do. In this work, we follow H.Wall who considered matrices~(\ref{main.matrix}) earlier than Schwarz.} who developed a method of transformation a given nonderogatory matrix with the characteristic polynomial satisfying~(\ref{Hurw.det.ineq}) to the form~(\ref{main.matrix}) (see~\cite{Schwarz}).

Theorem~\ref{Theorem.Schwarz.matrix.Wall} provides a solution of somewhat direct and inverse problems for tridiagonal matrices of the form~(\ref{main.matrix}). These problems, however, are not spectral and concern properties of the characteristic polynomial of $J_n$. Nevertheless, their solution is important to solving spectral direct and especially inverse problems for the matrices of the form~(\ref{main.matrix}). Thus it makes sense to give a bit more detailed explanation of Theorem~\ref{Theorem.Schwarz.matrix.Wall}.

So given a polynomial $p$ defined by~(\ref{main.poly}) and satisfying the inequalities~(\ref{Hurw.det.ineq}), the matrix $J_n$ such that $p(z)=\det(zE_n-J_n)$ can be reconstruct, for instance, by formul\ae~(\ref{Formulae.1}). However, one can also run a Sturm algorithm as it was noted in~\cite{Schwarz}.

Indeed, let
$$
\displaystyle f_0(z):=p(z)\qquad\mathrm{and}\qquad f_{1}(z):=\frac{q(z)}{b_0},
$$
where $q(z)$ is defined in~(\ref{poly.q}). The polynomials $f_0$ and $f_1$ are monic, and
$f_1$ is even or odd by construction. Now we construct a sequence of monic polynomials $f_k$,
$\deg{f_k}=n-k$, by the following process
$$
\begin{array}{c}
b_1f_2(z):=f_0(z)-(z+b_0)f_1(z),\\
b_2f_3(z):=f_1(z)-zf_2(z),\\
\cdots\cdots\cdots\cdots\cdots\cdots\cdots\cdots\\
b_{n-2}f_{n-1}(z):=f_{n-3}(z)-zf_{n-2}(z),\\
b_{n-1}:=f_{n-2}(z)-zf_{n-1}(z).\\
\end{array}
$$
Thus, these equations give us all the entries $b_k$ of the matrix $J_n$ in~(\ref{main.matrix}). Moreover, the polynomials $f_k$, $k=1,\ldots,n$, are even or odd, and $f_k(z)$ is the characteristic polynomial of the principal submatrix of $J_n$ obtained by deleting first $k$ rows
and first $k$ columns.

We, finally, investigate the structure of the matrix~(\ref{main.matrix}) in detail. Let again
$p$ be its characteristic polynomial: $p(z)=\det(zE_n-J_n)$. Consider the following auxiliary matrix
$$
A_n=
\left[\begin{array}{cccccc}
      0  & 1 &  0 &\dots&   0   & 0 \\
    -b_1 & 0 &1 &\dots&   0   & 0 \\
     0  &-b_2 & 0 &\dots&   0   & 0 \\
    \vdots&\vdots&\vdots&\ddots&\vdots&\vdots\\
     0  &  0  &  0  &\dots&0& 1\\
     0  &  0  &  0  &\dots&-b_{n-1}&0\\
\end{array}\right]
$$
and its submatrix
$$
A_{n-1}=
\left[\begin{array}{cccccc}
      0  & 1 &  0 &\dots&   0   & 0 \\
    -b_2 & 0 &1 &\dots&   0   & 0 \\
     0  &-b_3 & 0 &\dots&   0   & 0 \\
    \vdots&\vdots&\vdots&\ddots&\vdots&\vdots\\
     0  &  0  &  0  &\dots&0& 1\\
     0  &  0  &  0  &\dots&-b_{n-1}&0\\
\end{array}\right]
$$
obtained from $A_n$ by deleting its first row and column. It is easy to see
that
$$
p(z)=\det(zE_n-J_n)=\det(zE_n-A_n)+b_0\det(zE_{n-1}-A_{n-1})
$$
It is also clear that if $p(z)=p_0(z^2)+zp_1(z^2)$, where $p_0(u)$
and $p_1(u)$ are the even and odd parts of $p$, respectively, then

\noindent for $n=2l$,
\begin{equation}\label{even.odd.parts.even.degree}
p_0(z^2)=\det(zE_n-A_n)\quad\textrm{and}\quad zp_1(z^2)=b_0\det(zE_{n-1}-A_{n-1})
\end{equation}

\noindent for $n=2l+1$,
$$
zp_1(z^2)=\det(zE_n-A_n)\quad\textrm{and}\quad p_0(z^2)=b_0\det(zE_{n-1}-A_{n-1})
$$

These formul\ae~imply the following simple fact.
\begin{prepos}\label{proposition.1}
Let the polynomial $p(z)=p_0(z^2)+zp_1(z^2)$ be the characteristic polynomial of the matrix $J_n$ given in~(\ref{main.matrix}), $p(z)=\det(zE_n-J_n)$. Then the~polynomial
$q(z)=(-1)^{\left[\frac{n+1}2\right]}[p_0(-z^2)+(-1)^nzp_1(-z^2)]$ is the characteristic polynomial of the matrix
\begin{equation}\label{auxiliary.matrix.in.propos}
\left[\begin{array}{cccccc}
     b_0 &  1  &  0  &\dots&   0   & 0 \\
     b_1 &  0  &  1  &\dots&   0   & 0 \\
      0  & b_2 &  0  &\dots&   0   & 0 \\
    \vdots&\vdots&\vdots&\ddots&\vdots&\vdots\\
      0  &  0  &  0  &\dots&   0   & 1\\
      0  &  0  &  0  &\dots&b_{n-1}& 0\\
\end{array}\right]
\end{equation}
\end{prepos}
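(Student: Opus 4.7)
Let $\widetilde J_n$ denote the matrix in~(\ref{auxiliary.matrix.in.propos}) and set $\widetilde p(z):=\det(zE_n-\widetilde J_n)$; the task is to show $\widetilde p=q$. The strategy is to run the very same derivation that produced~(\ref{even.odd.parts.even.degree}) for $\widetilde J_n$ instead of $J_n$, and then to tie the two derivations together through a single diagonal similarity. The resulting change of variable $z\mapsto -iz$ is precisely what produces the signs inside $p_0(-z^2)$ and $p_1(-z^2)$ together with the prefactor $(-1)^{\left[\frac{n+1}{2}\right]}$.

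Step~1. Introduce the analogues $\widetilde A_n,\widetilde A_{n-1}$ of $A_n,A_{n-1}$ obtained by replacing every $-b_k$ on the subdiagonal with $+b_k$. Expanding $\det(zE_n-\widetilde J_n)$ along the first column exactly as in the paragraph preceding~(\ref{even.odd.parts.even.degree}) yields
\[
\widetilde p(z)=\det(zE_n-\widetilde A_n)-b_0\det(zE_{n-1}-\widetilde A_{n-1}),
\]
the minus sign reflecting that $b_0$ (and not $-b_0$) now sits in the $(1,1)$ position.

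Step~2 (the key observation). With $D=\operatorname{diag}(1,i,i^2,\dots,i^{n-1})$ one checks directly that $\widetilde A_n=i\,DA_nD^{-1}$: conjugation by $D$ scales the subdiagonal by $i$ and the superdiagonal by $i^{-1}=-i$, and the outer factor $i$ then converts the entries $(-b_k,1)$ into $(b_k,1)$. Consequently
\[
\det(zE_n-\widetilde A_n)=\det(zE_n-iA_n)=i^n\det(-izE_n-A_n),
\]
and the analogous identity holds with $n$ replaced by $n-1$.

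Step~3. Substitute the formulas of~(\ref{even.odd.parts.even.degree}) (and the odd-degree counterpart stated immediately after it) at the argument $-iz$; since $(-iz)^2=-z^2$, the polynomials $p_0(-z^2)$ and $p_1(-z^2)$ appear directly, and it only remains to collect powers of $i$. A short parity case split gives, for $n=2l$,
\[
\widetilde p(z)=(-1)^l\bigl[p_0(-z^2)+zp_1(-z^2)\bigr],
\]
which coincides with $q(z)$ because $\left[\tfrac{n+1}{2}\right]=l$ and $(-1)^n=1$; for $n=2l+1$ the roles of the two formulas swap and the same computation produces $\widetilde p(z)=(-1)^{l+1}\bigl[p_0(-z^2)-zp_1(-z^2)\bigr]$, again equal to $q(z)$. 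The only genuine obstacle is the parity bookkeeping of the powers of $i$; once the diagonal similarity of Step~2 is noticed, no further idea is needed.
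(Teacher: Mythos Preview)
Your proof is correct and follows essentially the same route as the paper's: both hinge on the identity $\det(zE_m-\widetilde A_m)=\det(zE_m-iA_m)=i^m\det(-izE_m-A_m)$ applied to $m=n,n-1$, followed by plugging in~(\ref{even.odd.parts.even.degree}) (or its odd-degree counterpart) and collecting powers of $i$. The only cosmetic differences are that you argue from $\widetilde p$ towards $q$ rather than the reverse, and that you make the passage $\det(zE_m-iA_m)=\det(zE_m-\widetilde A_m)$ explicit via the diagonal conjugation $D=\operatorname{diag}(1,i,\dots,i^{m-1})$, whereas the paper simply invokes the Kre\u{\i}n--Gantmacher fact that the characteristic polynomial of a tridiagonal matrix depends only on the products of off-diagonal pairs---your similarity is precisely the standard proof of that fact.
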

\begin{proof}
We prove the proposition for $n=2l$. For $n=2l+1$, it can be established analogously.

So let $n=2l$. Then $\left[\frac{n+1}2\right]=l$, and the polynomial $q$ has the form
$$
q(z)=(-1)^lp_0(-z^2)+(-1)^lzp_1(-z^2)
$$
Using formul\ae~(\ref{even.odd.parts.even.degree}) one can obtain
$$
(-1)^lp_0(-z^2)=(-1)^l\det(-izE_n-A_{n})=\det(zE_n-iA_{n})=\det(zE_n-B_{n}),
$$
where
$$
B_n=
\left[\begin{array}{cccccc}
      0  & 1 &  0 &\dots&   0   & 0 \\
    b_1 & 0 &1 &\dots&   0   & 0 \\
     0  &b_2 & 0 &\dots&   0   & 0 \\
    \vdots&\vdots&\vdots&\ddots&\vdots&\vdots\\
     0  &  0  &  0  &\dots&0& 1\\
     0  &  0  &  0  &\dots&b_{n-1}&0\\
\end{array}\right]\ .
$$
Here we used the fact (see e.g.~\cite[Chapter~II]{KreinGantmaher}) that the characteristic polynomial
of any tridiagonal matrix does not depend on $(i+1,i)$th and $(i,i+1)$th entries separately but on their product, so the matrices $iA_n$ and $B_n$ have the same characteristic polynomial.

Analogously we have
$$
\begin{array}{c}
(-1)^lzp_1(-z^2)=i(-1)^lb_0\det(-izE_n-A_{n-1})=\\
-b_0\det(zE_n-iA_{n-1})=-b_0\det(zE_n-B_{n-1}),
\end{array}
$$
where
$$
B_{n-1}=
\left[\begin{array}{cccccc}
      0  & 1 &  0 &\dots&   0   & 0 \\
    b_2 & 0 &1 &\dots&   0   & 0 \\
     0  &b_3 & 0 &\dots&   0   & 0 \\
    \vdots&\vdots&\vdots&\ddots&\vdots&\vdots\\
     0  &  0  &  0  &\dots&0& 1\\
     0  &  0  &  0  &\dots&b_{n-1}&0\\
\end{array}\right]\ .
$$
Thus, we get
$$
q(z)=\det(zE_n-B_{n})-b_0\det(zE_n-B_{n-1}),
$$
so $q(z)$ is the characteristic polynomial of the matrix~(\ref{auxiliary.matrix.in.propos}).
\end{proof}

\setcounter{equation}{0}

\section{Some solved direct and inverse spectral problems for the Schwarz matrices}\label{section:solved}

In the previous section, we described properties of the characteristic polynomials of
the Schwarz matrices of the form~(\ref{main.matrix}) and recall methods of reconstruction
such matrices from their characteristic polynomials. However, we are interested in direct
and inverse \emph{spectral} problems of the Schwarz matrices.

It is natural to study a dependence of the spectrum of the matrix $J_n$ given in~(\ref{main.matrix}) in terms of \textit{signs} of the entries $b_k$ of this matrix.
Since we have the relations~(\ref{Formulae.1}) between the entries $b_k$ of the matrix
$J_n$ and the coefficients $a_j$ of its characteristic polynomials, it makes sense to use
the results of the theory of root location of polynomials which use signs of Hurwitz minors.

The most
known such result is the Hurwitz theorem stating that a real polynomial $p(z)$ given by~(\ref{main.poly}) has all its zeroes in the open left half-plane of the complex plane
if and only if its Hurwitz minors~(\ref{delta}) are \emph{positive}. We recall that
a real polynomial is called \emph{Hurwitz stable} if all its zeroes lie in the open left
half-plane.

Using the Hurwitz theorem H.~Wall established the following fact in~\cite[p.314]{Wall'45} (see also~\cite{Schwarz}).
\begin{theorem}[Wall]\label{Theorem.Routh.form}
The Schwarz matrix $J_n$ given in~(\ref{main.matrix}) has all its eigenvalues in the open left half-plane if all the entries $b_k$ are positive. Conversely, given a sequence of complex numbers
$\lambda_1,\ldots,\lambda_n$ with negative real parts, there exists a unique Schwarz matrix $J_n$ of the form~(\ref{main.matrix}) with $b_k>0$, $k=0,1,\ldots,n-1$, such that\footnote{$\sigma(J_n)$ denotes the spectrum of the matrix $J_n$.} $\sigma(J_n)=\{\lambda_1,\ldots,\lambda_n\}$.
\end{theorem}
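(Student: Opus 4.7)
The plan is to derive the statement directly from Theorem~\ref{Theorem.Schwarz.matrix.Wall} combined with the classical Hurwitz stability criterion, which asserts that a real polynomial $p$ of the form~(\ref{main.poly}) is Hurwitz stable precisely when all its Hurwitz determinants $\Delta_j(p)$, $j=1,\ldots,n$, are positive. Once this is accepted, the theorem becomes essentially a bookkeeping consequence of formul\ae~(\ref{Formulae.1}).

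For the direct implication, let $p(z)=\det(zE_n-J_n)$ and assume all $b_k$ are positive. I would show by induction on $j$ that every $\Delta_j(p)$ is positive. The first line of~(\ref{Formulae.1}) gives the base case $\Delta_1(p)=b_0>0$, and rewriting the second line as
\[
\Delta_{k+1}(p)=b_k\,\frac{\Delta_{k-1}(p)\,\Delta_k(p)}{\Delta_{k-2}(p)},\qquad k=1,\ldots,n-1,
\]
together with the conventions $\Delta_{-1}(p)=\Delta_0(p)=1$, propagates positivity step by step from $j=1$ up to $j=n$. Applying the Hurwitz criterion to the resulting sequence of positive minors yields Hurwitz stability of $p$, i.e.\ all eigenvalues of $J_n$ lie in the open left half-plane.

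For the converse, given complex numbers $\lambda_1,\ldots,\lambda_n$ with negative real parts and closed under complex conjugation (so that the product below has real coefficients), I would form the monic real polynomial $p(z)=\prod_{i=1}^n(z-\lambda_i)$. Since $p$ is Hurwitz stable, the Hurwitz criterion forces $\Delta_j(p)>0$ for every $j=1,\ldots,n$, and in particular $\Delta_j(p)\neq 0$. Theorem~\ref{Theorem.Schwarz.matrix.Wall} then provides a \emph{unique} Schwarz matrix $J_n$ of the form~(\ref{main.matrix}) whose characteristic polynomial is $p$, and formul\ae~(\ref{Formulae.1}) express each entry $b_k$ as a ratio of products of positive Hurwitz determinants, so that $b_k>0$ for all $k$.

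The only genuinely substantive ingredient is the Hurwitz criterion itself, which is invoked in both directions; everything else reduces to verifying the inductive step above and reading off signs from~(\ref{Formulae.1}). Thus I expect no real obstacle beyond citing the Hurwitz theorem carefully and noting that the prescribed spectrum must be invariant under complex conjugation so that the interpolating polynomial $p$ is real.
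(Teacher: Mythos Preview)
Your proposal is correct and follows exactly the route the paper indicates: the paper does not spell out a proof of Theorem~\ref{Theorem.Routh.form} but states that it is obtained ``using the Hurwitz theorem'' together with Wall's formul\ae~(\ref{Formulae.1}) and Theorem~\ref{Theorem.Schwarz.matrix.Wall}, which is precisely what you do. Your added remark that the prescribed spectrum must be closed under complex conjugation is a sensible clarification of an implicit hypothesis in the statement.
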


Thus this theorem solves the direct and inverse spectral problems for stable Schwarz matrices, that is, the Schwarz matrices with positive $b_k$, that are sometimes called \textit{Routh canonical forms} (see e.g.~\cite{Puri_Weygandt,Datta2}). Note that Wall's work~\cite{Wall'45} where he considered Hurwitz stable polynomials as characteristic polynomials of matrices~(\ref{main.matrix}) with positive $b_k$ appeared earlier than the paper~\cite{Schwarz} by H.~Schwarz.


Next result regarding eigenvalue location of the matrix~(\ref{main.matrix}) is based on the
so-called Routh-Hurwitz theorem established by Gantmacher in~\cite[Theorem~4, p.~230]{Gantmakher}.
\begin{theorem}[Routh-Hurwitz]\label{Theorem.Routh-Hurwitz}
Let the polynomial $p$ be defined in~(\ref{main.poly}) and satisfy~(\ref{Hurw.det.ineq}).
The number $m$ of roots of $p$ which lie in the open right half-plane is given by the formula
$$
\displaystyle m=v\left(1,\Delta_1(p),\frac{\Delta_2(p)}{\Delta_1(p)},\frac{\Delta_3(p)}{\Delta_2(p)},\ldots,\frac{\Delta_{n}(p)}{\Delta_{n-1}(p)}\right)
$$
or equivalently
$$
\displaystyle m=v(1,\Delta_1(p),\Delta_3(p),\ldots)+v(1,\Delta_2(p),\Delta_4(p),\ldots),
$$
where $\Delta_j(p)$ are the Hurwitz determinants of $p$, and $v(c_0,c_1,\ldots,c_l)$ denotes the number of sign changes in the sequence $[c_0,c_1,\ldots,c_l]$.
\end{theorem}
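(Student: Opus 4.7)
The plan is to derive the theorem from Wall's expansion (Theorem~\ref{Theorem.Wall}) by the classical Cauchy-index technique. Under hypothesis~(\ref{Hurw.det.ineq}), Theorem~\ref{Theorem.Wall} produces the continued fraction~(\ref{Wall.cont.frac}) with coefficients $b_k$ given by~(\ref{Formulae.1}); this is the starting point.

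The core of the argument is to express $m$ as $(n-I)/2$, where $I$ is the Cauchy index of $q(iy)/p(iy)$ on the imaginary axis. This identity follows from the argument principle applied to $p$ on a large right-semicircular contour, combined with the natural decomposition $p(iy)=p_0(-y^2)+iyp_1(-y^2)$ --- a standard Hermite--Biehler type calculation. The quantity $I$ is then computed directly from~(\ref{Wall.cont.frac}) by induction on the depth of the continued fraction: substituting $z=iy$ makes every partial denominator purely imaginary, and each rung $b_k/(z+\cdots)$ contributes an amount to the index whose sign is controlled by $\sgn b_k$. Iterating down the fraction produces a closed-form expression for $I$ as a signed count involving the signs of $b_0,b_1,\ldots,b_{n-1}$.

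Substituting~(\ref{Formulae.1}) converts this signed count into a count of sign changes among the consecutive ratios $\Delta_j/\Delta_{j-1}$: indeed, $\sgn b_0=\sgn\Delta_1$ and $\sgn b_k=\sgn\bigl(\Delta_{k-2}\Delta_{k+1}/(\Delta_{k-1}\Delta_k)\bigr)$, so a sign change between $\Delta_{k-1}/\Delta_{k-2}$ and $\Delta_k/\Delta_{k-1}$ is equivalent to $\Delta_{k-2}$ and $\Delta_k$ having opposite signs. Telescoping these equivalences yields the first form of the formula. The equivalent decomposition $m=v(1,\Delta_1,\Delta_3,\ldots)+v(1,\Delta_2,\Delta_4,\ldots)$ then follows by separating the contributions of even- and odd-indexed $b_k$; combinatorially this reflects that a sign change between consecutive ratios corresponds to a sign change between terms two apart in the $\Delta_j$-sequence, and algebraically it dovetails with the two interlacing Sturm chains arising from the parity structure exhibited in Proposition~\ref{proposition.1}.

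The principal obstacle is the Cauchy-index computation in the middle step: the top rung of~(\ref{Wall.cont.frac}) carries an extra constant $b_0$ in its denominator that the lower rungs lack, so the base of the induction and the parity of $n$ must be tracked with some care, and one must verify that the signs propagating through the continued fraction assemble into exactly the asserted sign-change count and not some other linear combination. These difficulties are handled in detail by Gantmacher~\cite{Gantmakher}; the work specific to the present setting is the substitution~(\ref{Formulae.1}) and the resulting bookkeeping in terms of Hurwitz determinants.
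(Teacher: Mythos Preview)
The paper does not prove this theorem at all: it is quoted verbatim as a classical result and attributed to Gantmacher~\cite[Theorem~4, p.~230]{Gantmakher}, and is then used as a black box to derive Theorem~\ref{Theorem.Schwarz.Routh-Hurwitz}. So there is no ``paper's own proof'' to compare against.

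Your sketch is the standard Cauchy-index argument underlying Gantmacher's proof, and you say as much in your final paragraph. The overall strategy---argument principle on a large right half-disc, reduction to a Cauchy index along the imaginary axis, evaluation of that index via the continued fraction, then translation into sign changes via~(\ref{Formulae.1})---is correct. Two places deserve tightening if you intend this as a self-contained proof rather than an outline. First, the phrase ``Cauchy index of $q(iy)/p(iy)$'' is not quite right: that quotient is complex-valued; what one actually indexes is the real rational function $\Re p(iy)/\Im p(iy)$ (equivalently $p_0(-y^2)\big/\bigl(y\,p_1(-y^2)\bigr)$), and the identity $m=(n-I)/2$ needs the hypothesis that $p$ has no purely imaginary zeros, which is guaranteed here by~(\ref{Hurw.det.ineq}) since $\Delta_n(p)\neq0$ and the resultant of $p_0,p_1$ is involved. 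Second, your sentence linking $\sgn b_k$ to ``a sign change between $\Delta_{k-1}/\Delta_{k-2}$ and $\Delta_k/\Delta_{k-1}$'' has its indices misaligned with the formula for $b_k$ in~(\ref{Formulae.1}); the bookkeeping is routine but should be written out carefully, since this is exactly the step that converts the continued-fraction count into the Hurwitz-determinant count and hence carries the whole weight of the conclusion.
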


\begin{remark}
Note that in Theorem~\ref{Theorem.Routh-Hurwitz}, all Hurwitz determinants of the polynomial $p$
are nonzero by assumption, so we use the standard calculation of the sign changes in the sequences of the Hurwitz determinants. However, this theorem is also true in the case when some of Hurwitz determinants of $p$
equal zero~\cite[\S8, p.~235]{Gantmakher} (see also~\cite{Holtz_Tyaglov} or
comments to Theorem~\ref{Theorem.general.Hurwitz.criterion} on the page~\pageref{Frobenius}).
\end{remark}

Using Theorem~\ref{Theorem.Routh-Hurwitz} and formul\ae~(\ref{Formulae.1}) one can easily obtain
the following result due to Schwarz~\cite[Satz~5]{Schwarz} which also follows from Theorem D and
formul\ae~$(2.1)$ of Wall's work~\cite{Wall'45}.
\begin{theorem}\label{Theorem.Schwarz.Routh-Hurwitz}
Given a real matrix $J_n$ as in~(\ref{main.matrix}), the number of negative terms
in the sequence
\begin{equation}\label{entries.sequence}
b_0,\  b_0b_1,\  b_0b_1b_2,\ldots, \ b_0b_1\cdots b_{n-1}
\end{equation}
is equal to the number of eigenvalues of $J_n$ in the open right half-plane of the complex plane.
\end{theorem}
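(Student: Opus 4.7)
The plan is to reduce the statement to the Routh--Hurwitz theorem (Theorem~\ref{Theorem.Routh-Hurwitz}) by telescoping Wall's formul\ae~(\ref{Formulae.1}). As a first step, I would express each partial product $b_0b_1\cdots b_K$ in closed form. Using $b_0=\Delta_1(p)$, the recursion $b_k=\Delta_{k-2}(p)\Delta_{k+1}(p)/(\Delta_{k-1}(p)\Delta_k(p))$, and the conventions $\Delta_{-1}(p)=\Delta_0(p)=1$, a short induction on~$K$ (equivalently, direct cancellation in the product $\prod_{k=1}^{K}b_k$, where each $\Delta_j$ appears once in a numerator and once in a denominator except at the two ends) yields
$$
b_0b_1\cdots b_K=\frac{\Delta_{K+1}(p)}{\Delta_{K-1}(p)},\qquad K=0,1,\ldots,n-1.
$$

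Next, I would translate the count of negative terms into a sign-change count on the Hurwitz determinants. Setting $\delta_k=\sgn\Delta_k(p)$ with $\delta_{-1}=\delta_0=1$, the identity above shows that the $(K{+}1)$-st term of~(\ref{entries.sequence}) is negative if and only if $\delta_{K-1}\neq\delta_{K+1}$. Splitting $K\in\{0,\ldots,n-1\}$ by parity, the values $K=2j$ traverse exactly the consecutive pairs in the odd-indexed subsequence $1,\Delta_1(p),\Delta_3(p),\ldots$, while the values $K=2j+1$ traverse exactly the consecutive pairs in the even-indexed subsequence $1,\Delta_2(p),\Delta_4(p),\ldots$. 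A brief case check, distinguishing $n=2l$ from $n=2l+1$, confirms that the terminal pair of each subsequence is hit and that nothing is counted twice. Consequently the number of negative terms in~(\ref{entries.sequence}) equals
$$
v\bigl(1,\Delta_1(p),\Delta_3(p),\ldots\bigr)+v\bigl(1,\Delta_2(p),\Delta_4(p),\ldots\bigr).
$$

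Finally, I would invoke Theorem~\ref{Theorem.Routh-Hurwitz}. Because all $b_k$ are nonzero by hypothesis, Theorem~\ref{Theorem.Schwarz.matrix.Wall} guarantees $\Delta_j(p)\neq0$ for $j=1,\ldots,n$, so the Routh--Hurwitz theorem applies and its second form identifies the displayed sum of sign-change counts with the number of eigenvalues of $J_n$ in the open right half-plane. The only step demanding real care is the parity bookkeeping in the middle step: one must verify that, as $K$ runs through $\{0,\ldots,n-1\}$, each consecutive pair in each parity-subsequence is visited exactly once and that the boundary term ($\Delta_n$ or $\Delta_{n-1}$, depending on the parity of $n$) is reached. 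This is a purely combinatorial check rather than an analytic one, and beyond it no further input is needed.
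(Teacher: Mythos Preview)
Your proposal is correct and follows exactly the route the paper indicates: it says only that Theorem~\ref{Theorem.Schwarz.Routh-Hurwitz} follows ``easily'' from Theorem~\ref{Theorem.Routh-Hurwitz} together with formul\ae~(\ref{Formulae.1}), and your telescoping $b_0b_1\cdots b_K=\Delta_{K+1}(p)/\Delta_{K-1}(p)$ followed by the parity split is precisely how one makes that remark explicit. The combinatorial endpoint check you flag is routine and your description of it is accurate.
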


This theorem uses sign patterns of the entries of the matrix $J_n$ to localize distributions of
its eigenvalues. So this result can be identified as the solution of the direct spectral problem
of the matrix $J_n$. The inverse spectral problem is somewhat trivial in light of Theorems~\ref{Theorem.Schwarz.matrix.Wall} and~\ref{Theorem.Routh-Hurwitz} and formul\ae~(\ref{Formulae.1}).
\begin{theorem}\label{Theorem.Schwarz.Routh-Hurwitz.inverse}
Let $\lambda_1,\ldots,\lambda_n$ be a sequence of complex numbers with $m$ numbers in the open right half-plane and $n-m$ numbers in the open left half-plane such that the polynomial $p(z)=\prod\limits_{i=1}^n(z-\lambda_i)$ satisfies the inequalities~(\ref{Hurw.det.ineq}).
There exists a unique matrix $J_n$ of the form~(\ref{main.matrix}) such that the number of negative terms in the sequence~(\ref{entries.sequence}) constructed with the entries of $J_n$ equals $m$
and $\sigma(J_n)=\{\lambda_1,\ldots,\lambda_n\}$.
\end{theorem}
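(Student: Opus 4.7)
The plan is to derive both assertions from Theorem~\ref{Theorem.Schwarz.matrix.Wall} (for existence and uniqueness of the matrix) combined with a direct computation that links the signs of the partial products $b_0 b_1 \cdots b_k$ to the Hurwitz determinants appearing in Theorem~\ref{Theorem.Routh-Hurwitz}. The hypothesis already guarantees that $p(z) = \prod_{i=1}^n (z-\lambda_i)$ has real coefficients and satisfies $\Delta_j(p) \neq 0$ for $j = 1,\ldots,n$, so Theorem~\ref{Theorem.Schwarz.matrix.Wall} produces a unique Schwarz matrix $J_n$ of the form~(\ref{main.matrix}) with $p$ as its characteristic polynomial; in particular $\sigma(J_n) = \{\lambda_1,\ldots,\lambda_n\}$. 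This disposes of both the existence and the uniqueness clauses.

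It remains to verify that the number of negative terms in the sequence~(\ref{entries.sequence}) equals $m$. Set $B_k := b_0 b_1 \cdots b_k$, so that~(\ref{entries.sequence}) is $(B_0,B_1,\ldots,B_{n-1})$. Using formul\ae~(\ref{Formulae.1}) with the convention $\Delta_{-1}(p) = \Delta_0(p) = 1$, a short telescoping induction on $k$ yields
$$
B_0 = \Delta_1(p), \qquad B_1 = \Delta_2(p), \qquad B_k = \frac{\Delta_{k+1}(p)}{\Delta_{k-1}(p)} \quad \text{for } 2 \leq k \leq n-1.
$$
Hence for $k \geq 2$ the sign of $B_k$ is determined precisely by whether $\Delta_{k-1}(p)$ and $\Delta_{k+1}(p)$ share the same sign.

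To finish, I split the sequence $(B_0,\ldots,B_{n-1})$ according to the parity of the index. The negative terms among $B_0, B_2, B_4, \ldots$ are in bijection with sign changes in the sequence $(1,\Delta_1(p),\Delta_3(p),\Delta_5(p),\ldots)$: the leading $1$ accounts for the possible negativity of $B_0 = \Delta_1(p)$, and each later $B_{2j}$ is negative exactly when $\Delta_{2j-1}(p)$ and $\Delta_{2j+1}(p)$ disagree in sign. The same reasoning applied to $B_1, B_3, \ldots$ produces the sign changes in $(1,\Delta_2(p),\Delta_4(p),\ldots)$. Adding the two counts and applying the second formula of Theorem~\ref{Theorem.Routh-Hurwitz} gives exactly $m$, the number of $\lambda_i$ with positive real part.

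The only mildly delicate step is the telescoping identification $B_k = \Delta_{k+1}(p)/\Delta_{k-1}(p)$, which is what converts the negativity count into the Routh--Hurwitz sign-change expression; once this is in hand, the remainder is a routine combinatorial bookkeeping by parity.
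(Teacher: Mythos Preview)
Your proof is correct and follows precisely the approach the paper indicates: the paper does not write out a proof but merely remarks that the result is ``somewhat trivial in light of Theorems~\ref{Theorem.Schwarz.matrix.Wall} and~\ref{Theorem.Routh-Hurwitz} and formul\ae~(\ref{Formulae.1}),'' and your argument is exactly the spelled-out version of that sentence. The telescoping identity $B_k=\Delta_{k+1}(p)/\Delta_{k-1}(p)$ together with the parity split is the natural way to pass from~(\ref{Formulae.1}) to the second formula in Theorem~\ref{Theorem.Routh-Hurwitz}, and this is also what underlies the paper's Theorem~\ref{Theorem.Schwarz.Routh-Hurwitz}, which you could alternatively have cited directly for the sign-count part.
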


Note that in Theorem~\ref{Theorem.Routh.form} we did not need to suppose the polynomial $p(z)=\prod\limits_{i=1}^n(z-\lambda_i)$ to satisfy~(\ref{Hurw.det.ineq}), because all
Hurwitz stable polynomials automatically satisfy them by the Hurwitz theorem we mentioned
above (see also Theorem~\ref{Theorem.Routh-Hurwitz} and remark after it).

\vspace{1mm}

Theorem~\ref{Theorem.Routh.form} deals with the Schwarz matrices with positive $b_k$, so it is natural to study the Schwarz matrices~(\ref{main.matrix}) with all negative $b_k$. This problem
was solved by O.~Holtz in~\cite[Corollary~2]{H4}, where she proved the following\footnote{It is worth to note that there is a mistake in the proof of the main theorem, Theorem~1, in~\cite{H4}. However, this mistake can be easily corrected, while the statement of the theorem is valid.}.
\begin{theorem}\label{Theorem.Holtz}
Let the matrix $J_n$ be defined in~(\ref{main.matrix}) with $b_k<0$, $k=0,\ldots,n-1$. Then its
eigenvalues~$\lambda_i$ are simple real and satisfy the inequalities
\begin{equation}\label{Theorem.Holtz.ineq}
\lambda_1>-\lambda_2>\lambda_3>\cdots>(-1)^{n-1}\lambda_n>0.
\end{equation}

Conversely, for any sequence of real numbers $\lambda_1,\ldots,\lambda_n$ distributed as in~(\ref{Theorem.Holtz.ineq}), there exists a unique matrix $J_n$ of the form~(\ref{main.matrix}) with $b_k<0$, $k=0,\ldots,n-1$, such that $\sigma(J_n)=\{\lambda_1,\ldots,\lambda_n\}$.
\end{theorem}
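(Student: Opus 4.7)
The strategy is to use Proposition~\ref{proposition.1} to reduce everything to the stable case already treated in Theorem~\ref{Theorem.Routh.form}, and then to push the resulting structural information about the characteristic polynomial through the classical Hermite--Biehler theorem. The key observation is that when $b_k<0$ for all $k$, the parameters $-b_k$ appearing in the auxiliary matrix~(\ref{auxiliary.matrix.in.propos}) are all positive, so that matrix is a Schwarz matrix of the form~(\ref{main.matrix}) with \emph{positive} coefficients; Theorem~\ref{Theorem.Routh.form} then tells us it is Hurwitz stable, i.e., the associated polynomial $q$ of Proposition~\ref{proposition.1} has all its roots in the open left half-plane.

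For the direct part, I first note that $J_n$ has simple real eigenvalues: the products of adjacent off-diagonal entries $(J_n)_{i,i+1}(J_n)_{i+1,i}=-b_i$ are all strictly positive, so conjugation of $J_n$ by a suitable real positive diagonal matrix turns it into a real symmetric tridiagonal matrix with nonzero off-diagonals, whose eigenvalues are automatically simple and real. To identify their pattern, I apply Hermite--Biehler to the stable polynomial $q$: its even and odd parts $q_0,q_1$ have simple negative real roots that strictly interlace. The relations $q_0(u)=(-1)^{[(n+1)/2]}p_0(-u)$ and $q_1(u)=\pm(-1)^{[(n+1)/2]}p_1(-u)$ (with the sign depending on the parity of $n$) translate this into: the even and odd parts $p_0,p_1$ of $p$ have simple positive real roots $0<\alpha_1<\beta_1<\alpha_2<\beta_2<\cdots$ that interlace. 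I then evaluate $p(z)=p_0(z^2)+zp_1(z^2)$ at $0$, $\pm\sqrt{\alpha_j}$, $\pm\sqrt{\beta_j}$ and $\pm\infty$, using $p(\pm\sqrt{\alpha_j})=\pm\sqrt{\alpha_j}\,p_1(\alpha_j)$ and $p(\pm\sqrt{\beta_j})=p_0(\beta_j)$; the signs of these quantities are fixed by the interlacing together with the known sign of the leading coefficient of $p_1$ (itself forced by monicness and Hurwitz stability of $q$). Counting sign changes along the real axis produces exactly $n$ distinct real roots which, sorted by absolute value, satisfy $\lambda_1>-\lambda_2>\lambda_3>\cdots>(-1)^{n-1}\lambda_n>0$.

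For the converse I run the chain of implications backwards. Given $\lambda_i$ satisfying~(\ref{Theorem.Holtz.ineq}), form $p(z)=\prod_{i=1}^n(z-\lambda_i)$; a sign analysis dual to the one above shows that $p_0,p_1$ have simple positive real interlacing roots, so the polynomial $q$ defined from $p$ via the formula of Proposition~\ref{proposition.1} is Hurwitz stable by Hermite--Biehler. Theorem~\ref{Theorem.Routh.form} then supplies a unique Schwarz matrix $\tilde J_n$ with coefficients $c_0,\dots,c_{n-1}>0$ and characteristic polynomial $q$. A direct check (carried out for both parities of $n$) shows that the map $p\leftrightarrow q$ of Proposition~\ref{proposition.1} is an involution, so the auxiliary matrix attached to $\tilde J_n$ --- which is the Schwarz matrix with parameters $b_k:=-c_k<0$ --- has characteristic polynomial $p$; uniqueness follows from Theorem~\ref{Theorem.Schwarz.matrix.Wall}. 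The main obstacle is the sign-counting step in the direct part: the signs of $p$ at the $\pm\sqrt{\alpha_j},\pm\sqrt{\beta_j}$ must be tracked carefully, since a mis-bookkeeping there scrambles which root lies in which gap, and the argument splits mildly into cases according to the parity of $n$ because of the $(-1)^n$ factor in Proposition~\ref{proposition.1}.
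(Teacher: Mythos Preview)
Your proposal is correct and follows essentially the same route the paper sketches in Remark~\ref{remark.proof.Holtz}: reduce to the stable case via Proposition~\ref{proposition.1} and Theorem~\ref{Theorem.Routh.form}, then transfer the root information back to~$p$. The only difference is that where the paper invokes Theorem~\ref{Theorem.connection.Hurwitz.self-interlacing} (the Hurwitz/self-interlacing duality, quoted from~\cite{Tyaglov.gen.Hurw}) as a black box, you reprove that duality by hand with Hermite--Biehler plus a sign-change count; your added symmetrization argument for simple real spectrum is a nice shortcut but is in fact subsumed by the sign count, which already yields $n$ distinct real roots.
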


This theorem was proved  in~\cite{Bebiano'11} by a technique different from one used in~\cite{H4}. However, it can be proved easily using properties of generalized Hurwitz
polynomials~\cite{Tyaglov.gen.Hurw} (see Remark~\ref{remark.proof.Holtz}). We just note that in Theorem~\ref{Theorem.Holtz}, there is no requirement for the polynomial $p(z)=\prod\limits_{i=1}^n(z-\lambda_i)$ to satisfy the inequalities~(\ref{Hurw.det.ineq}). As we will show, the polynomials with the distribution of zeroes as in~(\ref{Theorem.Holtz.ineq}) automatically satisfy~(\ref{Hurw.det.ineq}), since they are dual (in some sense\footnote{See Theorem~\ref{Theorem.connection.Hurwitz.self-interlacing}.}) to Hurwitz stable polynomials.

Finally, we should mention that in~\cite{Bebiano'11}, there was an attempt to solve direct and inverse problems for the matrix~(\ref{main.matrix}) with $b_0<0$, $b_1>0$ and $b_k<0$ for $k=2,\ldots,n-1$. However, their result is incorrect. We include the correct version of their Theorem~9 as an example of application of our results (see Theorem~\ref{Theorem.Bebiano.matrix}).

\setcounter{equation}{0}

\section{Generalized Hurwitz polynomials, basic properties}\label{section:generalized.Hurwitz}

In this section, we define (almost) generalized Hurwitz polynomials~\cite{Tyaglov.gen.Hurw} and review their basic property, which will be helpful to study spectral problems of the Schwarz matrices.

\begin{definition}\label{def.general.Hurw.poly}
A real polynomial $p$ is called \textit{generalized Hurwitz
polynomial} of type~I of order $\varkappa$, where\footnote{Here $[\alpha]$ denotes the maximal integer not exceeding $\alpha$.} $1\leqslant
\varkappa\leqslant\left[\frac{n+1}2\right]$, if it has exactly $\varkappa$ zeroes
in the \emph{closed} right half-plane, all of which are nonnegative and
simple:
$$
0\leqslant\mu_1<\mu_2<\cdots<\mu_{\varkappa},
$$
such that $p(-\mu_i)\neq0$, $i=2,\ldots,\varkappa$, $p(-\mu_1)\neq0$ if $\mu_1>0$, and $p$ has an odd
number of zeroes, counting multiplicities, on each interval
$(-\mu_{\varkappa},-\mu_{\varkappa-1}),\ldots,(-\mu_3,-\mu_2)$, $(-\mu_2,-\mu_1)$.
Moreover, the number of zeroes of $p$ on the interval~$(-\mu_1,0)$
(if any) is even, counting multiplicities. The other \emph{real} zeroes
lie on the interval $(-\infty,-\mu_{\varkappa})$: an odd number of zeroes,
counting multiplicities, when $n=2l$, and an even number of
zeroes, counting multiplicities, when $n=2l+1$. All nonreal zeroes
of $p$ (if any) are located in the open left half-plane of the
complex plane.
\end{definition}

Thus, the order $\varkappa$ of a generalized Hurwitz polynomial of type~I indicates the number of its zeroes
in the closed right half-plane. Moreover, the zeroes of a generalized Hurwitz polynomial
in the closed right half-plane structure the distribution of its negative zeroes, so not every real polynomial
with only real simple zeroes in the closed right half-plane is generalized Hurwitz. The generalized
Hurwitz polynomials of type~I of order $0$ are obviously Hurwitz stable polynomials, since they have no zeroes in the open right half-plane.

The generalized Hurwitz polynomials of type~II is a generalization of real polynomials with zeroes in the
open right half plane.
\begin{definition}\label{def.general.Hurw.poly.type.II}
A real polynomial $p(z)$ is generalized Hurwitz of type~II if the polynomial
$p(-z)$ is generalized Hurwitz of type~I.
\end{definition}

It is clear that all results obtained for the generalized Hurwitz
polynomials of type~I can be easily reformulated for  the
generalized Hurwitz polynomials of type~II. Thus we formulate all results
in this section only for generalized Hurwitz polynomials of type~I.

The main fact about generalized Hurwitz polynomials we use in this paper is the following theorem
established in~\cite{Tyaglov.gen.Hurw}.
\begin{theorem}[Generalized Hurwitz theorem]\label{Theorem.general.Hurwitz.criterion}
The polynomial $p$ given in~(\ref{main.poly}) is
generalized Hurwitz if and~only~if
\begin{equation}\label{general.Hurwitz.main.det.noneq}
\Delta_{n-1}(p)>0,\ \Delta_{n-3}(p)>0,\ \Delta_{n-5}(p)>0,\ldots
\end{equation}
The order $\varkappa$ of the polynomial $p$ equals
\begin{equation}\label{general.Hurwitz.poly.10}
\varkappa={\rm V^{F}}(\Delta_{n}(p),\Delta_{n-2}(p),\ldots,1)\qquad\mathrm{if}\quad
p(0)\neq0,
\end{equation}
or
\begin{equation}\label{general.Hurwitz.poly.11}
\varkappa={\rm V^{F}}(\Delta_{n-2}(p),\Delta_{n-4}(p),\ldots,1)+1\qquad\mathrm{if}\quad
p(0)=0,
\end{equation}
where $V^{F}(c_1,\ldots,c_n)$ denotes the number of sign changes in the sequence $\{c_1,\ldots,c_n\}$ calculated
in accordance with the Frobenius rule of signs.
\end{theorem}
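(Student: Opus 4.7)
The strategy I would adopt is to reduce the statement to two classical ingredients: the Hermite--Biehler framework, which relates the distribution of the zeros of $p$ to the interlacing of its even and odd parts, and the Routh--Hurwitz theorem (Theorem~\ref{Theorem.Routh-Hurwitz} above), which counts open right half-plane zeros by sign changes in the $\Delta_j(p)$. The bridge between the geometric Definition~\ref{def.general.Hurw.poly} and the algebraic sign conditions is the Wall identification~\reff{Formulae.1} of the $\Delta_j(p)$ with the partial quotients in the continued fraction expansion of $q(z)/p(z)$.

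First I would work with the decomposition $p(z)=p_0(z^2)+zp_1(z^2)$ and translate the interlacing-with-exceptions condition of Definition~\ref{def.general.Hurw.poly} into a precise statement about the zeros of $p_0(u)$ and $p_1(u)$ for $u\leqslant 0$, together with the coincidences at $u=\mu_i^2$ that account for the real zeros $\mu_i\geqslant 0$ of $p$. Assuming $p(0)\neq 0$ for concreteness, the simplicity of the $\mu_i$ combined with the odd-number-of-zeros condition on each $(-\mu_{k+1},-\mu_k)$ forces the rational function $R(u)=p_1(u)/p_0(u)$, after a controlled extraction of the factors corresponding to $u=\mu_i^2$, to be a Stieltjes function: negative real simple poles with positive residues. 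I would then identify $\Delta_{n-2k-1}(p)$ and $\Delta_{n-2k}(p)$ with Hankel minors of the power moments of $R(u)$, so that~\reff{general.Hurwitz.main.det.noneq} becomes positivity of the Hankel minors corresponding to the Stieltjes part, while the Frobenius sign changes in~\reff{general.Hurwitz.poly.10}/\reff{general.Hurwitz.poly.11} count exactly the positive poles, hence $\varkappa$ (with the modification for $\mu_1=0$ giving the two separate formulae).

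The converse would then follow from a perturbation/deformation argument: starting from any polynomial $p$ satisfying~\reff{general.Hurwitz.main.det.noneq}, one constructs the continued fraction via~\reff{Formulae.1} with the odd-indexed partial quotients of prescribed sign, and deforms $p$ to a known Hurwitz stable base point along a path where no $\Delta_{n-2k-1}(p)$ vanishes; the root distribution can jump only when some even-indexed $\Delta_{n-2k}(p)$ passes through zero, and the Frobenius rule is exactly calibrated to record how $\varkappa$ updates across such crossings. The principal obstacle, and where the real content of the proof lies, is the first step: a clean translation of the somewhat intricate Definition~\ref{def.general.Hurw.poly} -- note in particular the parity split of the zeros on $(-\infty,-\mu_\varkappa)$ and the even-count condition on $(-\mu_1,0)$ -- into interlacing data for $p_0$ and $p_1$ that remains robust when some of the even-indexed Hurwitz determinants vanish, so that Frobenius's rule of signs can be applied without ambiguity. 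Once that translation is secured, the Hankel-determinant identification together with the Routh--Hurwitz count essentially close the proof.
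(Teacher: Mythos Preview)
The paper does not contain a proof of this theorem: it is quoted from the author's companion work~\cite{Tyaglov.gen.Hurw} (``The main fact about generalized Hurwitz polynomials we use in this paper is the following theorem established in~\cite{Tyaglov.gen.Hurw}''), and is used here only as a black box to derive the results of Sections~\ref{section:spectral.problems} and~\ref{section:examples}. So there is nothing in the present paper to compare your proposal against.

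That said, your outline is broadly in the spirit of how such results are proved in~\cite{Tyaglov.gen.Hurw} and~\cite{Holtz_Tyaglov}: one passes from the Hurwitz determinants of $p$ to Hankel minors attached to the rational function $p_1/p_0$ (or its reciprocal), invokes the Stieltjes/Hamburger moment characterization to read off the interlacing of the zeros of $p_0$ and $p_1$, and then translates this back to the root pattern of $p$ via Hermite--Biehler. The point you correctly flag as the crux --- making Definition~\ref{def.general.Hurw.poly} match exactly a ``Stieltjes-with-$\varkappa$-defects'' structure for $p_1/p_0$, including the degenerate cases handled by the Frobenius rule --- is indeed where the work lies, and your sketch does not actually carry it out. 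In particular, the deformation argument you propose for the converse would need a careful justification that crossing a zero of an even-indexed $\Delta_{n-2k}(p)$ changes $\varkappa$ by exactly the amount predicted by the Frobenius rule, which is essentially equivalent to proving the theorem in the generic case first and then handling the boundary; the cited reference instead proves both directions simultaneously through the Hankel-minor/total-nonnegativity machinery rather than by deformation.
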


Recall that the Frobenius rule of signs is the following.

\noindent\textbf{Frobenius rule of signs}~\cite{Frobenius}\label{Frobenius} (see also~\cite[Ch.~X, \S10]{Gantmakher.1} and \cite[Ch.~2]{Holtz_Tyaglov}). \emph{Given a sequence of real numbers $\{c_1,\ldots,c_n\}$, where $c_1c_n\neq0$,
if, for some $i$ and $j$ $(0\leqslant i\leqslant j)$,}
$$
c_i\neq0,\quad c_{i+1}=c_{i+2}=\cdots=c_{i+j}=0,\quad c_{i+j+1}\neq0
$$
\emph{then the number $V^{F}(c_1,\ldots,c_n)$ of Frobenius sign changes must be calculated by assigning signs as follows:}
$$
\sgn c_{i+\nu}=(-1)^{\frac{\nu(\nu-1)}2}\sgn c_i, \quad \nu=1,2,\ldots,j.
$$

The Frobenius rule of signs was introduced by Frobenius~\cite{Frobenius} for calculating the number of sign changes in a sequence of Hankel minors. For details, see~\cite{Holtz_Tyaglov}.

Since we consider only polynomials with nonzero Hurwitz determinants in this work, in the rest of the paper the number of Frobenius sign changes $V^{F}$ will be changed by the standard number of sign changes~$v$, and the formula~(\ref{general.Hurwitz.poly.11}) will not be used, since
$\Delta_n(p)=0$ if $p(0)=0$.

By~(\ref{general.Hurwitz.poly.10})--(\ref{general.Hurwitz.poly.11}), $\varkappa=0$ if and only if $\Delta_{n-2k}(p)>0$, $k=0,1,\ldots,\left[\frac {n-1}2\right]$. As we mentioned above, the generalized Hurwitz polynomials with $\varkappa=0$ are Hurwitz stable polynomials. Thus, Theorem~\ref{Theorem.general.Hurwitz.criterion} implies that a real polynomial $p$ of degree $n$ is Hurwitz stable if and only if $\Delta_j(p)>0$, $j=1,\ldots,n$. This is exactly the Hurwitz stability criterion.

On the other side, the formul\ae~(\ref{general.Hurwitz.poly.10})--(\ref{general.Hurwitz.poly.11}) imply
that $\varkappa=\left[\frac{n+1}2\right]$ with $p(0)\neq0$ if and only if
\begin{equation}\label{self-interlacing.inequalities}
(-1)^{d}\Delta_{n}(p)>0,(-1)^{d}\Delta_{n-2}(p)>0,\ldots,\quad\mathrm{where}\quad d=\left[\frac {n+1}2\right].
\end{equation}
In this case, the generalized Hurwitz polynomial $p$ of type~I has neither nonreal nor multiple zeroes, so its zeroes are real and simple. Moreover,
they are distributed as follows:
\begin{equation}\label{self-interlacing.zero.distribution.I.type}
0<\lambda_1<-\lambda_2<\lambda_3<\ldots<(-1)^{n-1}\lambda_n.
\end{equation}
\begin{definition}\label{def.self-interlacing.I}
A real polynomial whose zeroes are distributed as in~(\ref{self-interlacing.zero.distribution.I.type}) is called \textit{self-interlacing} of type~I.
\end{definition}

Analogously to the general case, we introduce the self-interlacing polynomials of type~II.
\begin{definition}\label{def.self-interlacing.II}
 A polynomial $p(z)$ is called self-interlacing of type~II if $p(-z)$ is self-interlacing of type~I, or equivalently if its zeroes are distributed as follows:
\begin{equation}\label{self-interlacing.zero.distribution.II.type}
0<-\lambda_1<\lambda_2<-\lambda_3<\ldots<(-1)^{n}\lambda_n.
\end{equation}
\end{definition}

From Definitions~\ref{def.self-interlacing.I}--\ref{def.self-interlacing.II} it is easy to see that a real polynomial $p(z)$ is self-interlacing (of type~I or~II) if and only if it has real and simple zeroes which interlace the zeroes of the polynomial $p(-z)$.

If we return now to Theorem~\ref{Theorem.Holtz}, we will see from~(\ref{Theorem.Holtz.ineq}) that the characteristic polynomials of the matrices~(\ref{main.matrix}) with all $b_k<0$ are self-interlacing polynomials: of type~I for odd $n$, and of type~II for even~$n$.

In~\cite{Tyaglov.gen.Hurw}, there was also established the following important fact about the
relation (in fact, duality) between Hurwitz stable and self-interlacing polynomials. We will use
this fact later to reveal a relation between Theorems~\ref{Theorem.Routh.form} and~\ref{Theorem.Holtz} (see Remark~\ref{remark.proof.Holtz}).
\begin{theorem}\label{Theorem.connection.Hurwitz.self-interlacing}
A polynomial $p(z)=p_0(z^2)+zp_1(z^2)$ is self-interlacing of type~I
if and only if the polynomial $q(z)=p_0(-z^2)-zp_1(-z^2)$ is
Hurwitz stable, where $p_0(u)$ and $p_1(u)$ are the even and odd parts of $p$, respectively (see~(\ref{even.part})--(\ref{odd.part})).
\end{theorem}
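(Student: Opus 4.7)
The plan is to transfer both Hurwitz stability of $q$ and self-interlacing of $p$ into sign patterns of the Hurwitz determinants $\Delta_k(p)$, and to verify that the two patterns coincide. The key intermediate fact will be that $\Delta_k(\widetilde q)=\pm\Delta_k(p)$ with an explicit and easily described sign, where $\widetilde q$ denotes a monic normalisation of $q$.

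First I will write $p(z)=\sum_{k=0}^n a_kz^{n-k}$ with $a_0=1$ and read off the coefficients of $q$ directly from~(\ref{even.part})--(\ref{odd.part}). The leading coefficient of $q$ equals $(-1)^l$ or $(-1)^{l+1}$ according to whether $n=2l$ or $n=2l+1$, so $q$ differs from a monic $\widetilde q$ only by an overall sign, and the coefficients of $\widetilde q$ satisfy $\widetilde b_k=\varepsilon_k a_k$ with $\varepsilon_k=(-1)^{\lfloor k/2\rfloor}$ for $n$ even and $\varepsilon_k=(-1)^{\lceil k/2\rceil}$ for $n$ odd. Since the Hurwitz matrix has entries $H(p)_{ij}=a_{2j-i}$ (with $a_0=1$ and $a_k=0$ for $k<0$ or $k>n$), the signs $\varepsilon_{2j-i}$ factor as $\rho_i\sigma_j$ with $\sigma_j=(-1)^j$ and $\rho_i\in\{\pm1\}$ depending only on the row. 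Consequently
$$H(\widetilde q)=D_\rho\,H(p)\,D_\sigma$$
for diagonal $\{\pm1\}$-matrices $D_\rho,D_\sigma$, and taking the leading $k\times k$ principal minor yields $\Delta_k(\widetilde q)=c_k\,\Delta_k(p)$ with an explicit $c_k\in\{\pm1\}$.

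Next I will evaluate $c_k$ modulo~$2$ using standard closed forms for $\sum_{i=1}^k\lfloor i/2\rfloor$ and $\sum_{i=1}^k\lceil i/2\rceil$, separating the parities of $k$ and of $n$. The outcome is: for $n=2l$, $\Delta_{2m+1}(\widetilde q)=\Delta_{2m+1}(p)$ and $\Delta_{2m}(\widetilde q)=(-1)^m\Delta_{2m}(p)$; for $n=2l+1$, $\Delta_{2m}(\widetilde q)=\Delta_{2m}(p)$ and $\Delta_{2m+1}(\widetilde q)=(-1)^{m+1}\Delta_{2m+1}(p)$. By the classical Hurwitz criterion, $q$ is Hurwitz stable iff $\Delta_k(\widetilde q)>0$ for every $k$; via the above relations, this is the same as $\Delta_{n-1}(p),\Delta_{n-3}(p),\ldots>0$ together with strict sign alternation of the subsequence $\Delta_n(p),\Delta_{n-2}(p),\ldots$ with the prescribed leading sign. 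By Theorem~\ref{Theorem.general.Hurwitz.criterion} and formula~(\ref{general.Hurwitz.poly.10}), this is precisely the sign pattern characterising a generalized Hurwitz polynomial of type~I of the maximal order $\varkappa=[(n+1)/2]$ with $p(0)\neq0$, i.e., a self-interlacing polynomial of type~I. This gives the equivalence in both directions simultaneously.

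The main technical obstacle is the sign bookkeeping itself: separately handling the four cases from the parities of $n$ and $k$, computing the parities of the various row/column sign products, and matching the resulting $c_k$ to the alternation produced by $V^F(\Delta_n(p),\Delta_{n-2}(p),\ldots,1)=[(n+1)/2]$. Once these elementary computations are executed carefully, no additional ingredient is needed: the equivalence then drops out directly from the classical Hurwitz criterion applied to $q$ and from Theorem~\ref{Theorem.general.Hurwitz.criterion} applied to $p$.
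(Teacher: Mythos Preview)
Your proposal is correct. Note, however, that the present paper does not actually prove Theorem~\ref{Theorem.connection.Hurwitz.self-interlacing}: it is only quoted here from~\cite{Tyaglov.gen.Hurw}, so there is no ``paper's own proof'' to compare against directly.

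That said, your argument is a clean and self-contained derivation from Theorem~\ref{Theorem.general.Hurwitz.criterion} and the classical Hurwitz criterion. The key identity $H(\widetilde q)=D_\rho H(p)D_\sigma$ with diagonal $\pm1$ matrices is exactly right: for $n$ even one has $\varepsilon_{2j-i}=(-1)^{\lfloor(2j-i)/2\rfloor}$, and for $n$ odd $\varepsilon_{2j-i}=(-1)^{\lceil(2j-i)/2\rceil}$, and in both cases this factorises as claimed with $\sigma_j=(-1)^j$. The resulting sign relations $\Delta_{2m+1}(\widetilde q)=\Delta_{2m+1}(p)$, $\Delta_{2m}(\widetilde q)=(-1)^m\Delta_{2m}(p)$ for $n$ even, and $\Delta_{2m}(\widetilde q)=\Delta_{2m}(p)$, $\Delta_{2m+1}(\widetilde q)=(-1)^{m+1}\Delta_{2m+1}(p)$ for $n$ odd, are correct, and they translate the Hurwitz conditions $\Delta_k(\widetilde q)>0$ precisely into~(\ref{general.Hurwitz.main.det.noneq}) together with the maximal number of sign changes $\varkappa=[(n+1)/2]$ in~(\ref{general.Hurwitz.poly.10}). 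One minor remark: the condition $p(0)\neq0$ (needed to invoke~(\ref{general.Hurwitz.poly.10}) rather than~(\ref{general.Hurwitz.poly.11})) is automatic on both sides, since $q(0)=p_0(0)=p(0)$ and both Hurwitz stability and self-interlacing of type~I force a nonzero constant term; it would do no harm to say this explicitly. The only caveat is logical rather than mathematical: your proof relies on Theorem~\ref{Theorem.general.Hurwitz.criterion}, which in this paper is also merely cited from~\cite{Tyaglov.gen.Hurw}, so one should make sure that in that reference Theorem~\ref{Theorem.general.Hurwitz.criterion} is established independently of Theorem~\ref{Theorem.connection.Hurwitz.self-interlacing}.
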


\noindent Indeed, there can be established a more general fact.
\begin{theorem}[\cite{Tyaglov.gen.Hurw}]\label{Theorem.connection.Hurwitz.gen.Hurw}
A polynomial $p(z)=p_0(z^2)+zp_1(z^2)$, $p(0)\neq0$, is generalized Hurwitz of order $\varkappa$ of type~I (type~II) if and only if the polynomial $q(z)=p_0(-z^2)-zp_1(-z^2)$ is generalized Hurwitz of order $\left[\frac{n+1}2\right]-\varkappa$ of type~I (type~II), where $n=\deg p$.
\end{theorem}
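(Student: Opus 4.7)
The plan is to deduce the duality from the generalized Hurwitz criterion (Theorem~\ref{Theorem.general.Hurwitz.criterion}) applied to both $p$ and $q$, after first normalizing $q$ to be monic. A direct check of leading terms shows that $q(z)=p_{0}(-z^{2})-zp_{1}(-z^{2})$ has leading coefficient $(-1)^{\lfloor(n+1)/2\rfloor}$, so I work with $\tilde q(z):=(-1)^{\lfloor(n+1)/2\rfloor}q(z)$, which is monic of degree $n$. Writing $p(z)=\sum_{k=0}^{n}a_{k}z^{n-k}$ and $\tilde q(z)=\sum_{k=0}^{n}\tilde a_{k}z^{n-k}$ with $a_{0}=\tilde a_{0}=1$, a routine inspection of the even and odd parts yields $\tilde a_{k}=\tilde\varepsilon_{k}a_{k}$, where $\tilde\varepsilon_{k}=(-1)^{k(k-1)/2}$ when $n$ is even and $\tilde\varepsilon_{k}=(-1)^{k(k+1)/2}$ when $n$ is odd.

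The key technical step is the parity identity
\[
\tilde\varepsilon_{2j-i}=r_{i}\cdot(-1)^{j},\qquad r_{i}=\begin{cases}(-1)^{i(i+1)/2},&n\text{ even},\\ (-1)^{i(i-1)/2},&n\text{ odd},\end{cases}
\]
obtained by expanding $(2j-i)(2j-i\mp 1)/2$ modulo $2$. It says the Hurwitz matrix of $\tilde q$ factors as $H(\tilde q)=D_{r}\,H(p)\,D_{c}$, where $D_{r}=\mathrm{diag}(r_{1},r_{2},\ldots)$ and $D_{c}=\mathrm{diag}(-1,+1,-1,+1,\ldots)$. Taking leading principal minors gives
\[
\Delta_{j}(\tilde q)=\sigma_{j}\,\Delta_{j}(p),\qquad \sigma_{j}=\prod_{i=1}^{j}r_{i}\cdot\prod_{k=1}^{j}(-1)^{k}.
\]

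A short calculation then verifies two properties of $\sigma_{j}$. First, $\sigma_{n-1}=\sigma_{n-3}=\cdots=+1$, so the hypothesis $\Delta_{n-1-2k}(\cdot)>0$ of Theorem~\ref{Theorem.general.Hurwitz.criterion} holds for $p$ if and only if it holds for $\tilde q$; in particular $p$ is generalized Hurwitz of type~I if and only if $\tilde q$ is. Second, the consecutive ratios $\sigma_{n-2i}/\sigma_{n-2i-2}$ all equal $-1$, and also $\sigma_{\mathrm{last}}=-1$ (with $\mathrm{last}=2$ if $n$ is even and $\mathrm{last}=1$ if $n$ is odd). Therefore, along the two sign sequences
\[
(\Delta_{n}(p),\Delta_{n-2}(p),\ldots,1)\quad\text{and}\quad(\Delta_{n}(\tilde q),\Delta_{n-2}(\tilde q),\ldots,1),
\]
at each consecutive pair exactly one of them shows a sign change. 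Adding the two sign-change counts obtained from~(\ref{general.Hurwitz.poly.10}) yields $\varkappa(p)+\varkappa(\tilde q)=\lfloor(n+1)/2\rfloor$, which is the type~I statement. For type~II I apply the type~I result to $P(z):=p(-z)=p_{0}(z^{2})-zp_{1}(z^{2})$; a direct computation shows that the polynomial constructed from $P$ via the same recipe is $q(-z)$, and Definition~\ref{def.general.Hurw.poly.type.II} translates this back into the claim for $p$ and $q$.

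The main obstacle is the parity factorization $\tilde\varepsilon_{2j-i}=r_{i}(-1)^{j}$. Without the separability of the quadratic exponent $(2j-i)(2j-i\mp 1)/2$ modulo $2$, one could not relate $H(\tilde q)$ to $H(p)$ by a diagonal congruence, and the clean duality $\varkappa(p)+\varkappa(q)=\lfloor(n+1)/2\rfloor$ via Theorem~\ref{Theorem.general.Hurwitz.criterion} would not emerge.
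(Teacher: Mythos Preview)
The paper does not actually supply a proof of this theorem: it is quoted from \cite{Tyaglov.gen.Hurw} and then used. So there is no ``paper's own proof'' to compare against; I can only assess your argument on its merits.

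Your approach is sound and the computations check out. The coefficient relation $\tilde a_{k}=(-1)^{k(k\mp1)/2}a_{k}$ is correct in both parities of $n$, and the separation $\tilde\varepsilon_{2j-i}=r_{i}(-1)^{j}$ does give the diagonal factorisation $H(\tilde q)=D_{r}H(p)D_{c}$, hence $\Delta_{j}(\tilde q)=\sigma_{j}\Delta_{j}(p)$. Your evaluation of the signs $\sigma_{j}$ is also right: along the $(n-1,n-3,\dots)$ subsequence $\sigma\equiv+1$, so condition~\eqref{general.Hurwitz.main.det.noneq} transfers verbatim between $p$ and $\tilde q$; along the $(n,n-2,\dots,1)$ subsequence the $\sigma$'s alternate, which yields the complementarity $\varkappa(p)+\varkappa(\tilde q)=\lfloor(n+1)/2\rfloor$. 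The type~II reduction via $P(z)=p(-z)$ is correct as well. Incidentally, your determinant identity is the Hurwitz-minor counterpart of the matrix-level transformation in Proposition~\ref{proposition.1}.

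One point deserves a sentence of justification. Your complementarity step (``at each consecutive pair exactly one of them shows a sign change'') is literally an assertion about \emph{ordinary} sign changes, whereas formula~\eqref{general.Hurwitz.poly.10} uses the Frobenius count $V^{F}$. If some $\Delta_{n-2i}(p)$ vanish, the naive complementarity $V(c)+V(d)=m$ for $d_{i}=(-1)^{i}c_{i}$ can fail (e.g.\ $c=(1,0,1)$). In the setting of this paper all Hurwitz determinants are assumed nonzero, so the issue does not arise; but since Theorem~\ref{Theorem.connection.Hurwitz.gen.Hurw} is stated in full generality, you should either remark that the odd-index positivity~\eqref{general.Hurwitz.main.det.noneq} forces the neighbouring even-index minors around a zero to have opposite signs (so the Frobenius count still complements), or restrict your claim to the nondegenerate case and handle the boundary by a limiting argument.
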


\begin{remark}\label{remark.type.II.0}
\emph{From Definition~\ref{def.general.Hurw.poly.type.II} and Theorem~\ref{Theorem.connection.Hurwitz.gen.Hurw} it is clear that if $p(z)=p_0(z^2)+zp_1(z^2)$ is generalized Hurwitz of type~I, then $p(z)=p_0(-z^2)+zp_1(-z^2)$ is generalized Hurwitz of type~II.}
\end{remark}
\begin{remark}\label{remark.proof.Holtz}
\emph{Theorem~\ref{Theorem.Routh.form}, Proposition~\ref{proposition.1}, Theorem~\ref{Theorem.connection.Hurwitz.self-interlacing} and Remark~\ref{remark.type.II.0} imply Theorem~\ref{Theorem.Holtz}. Conversely, Wall's Theorem~\ref{Theorem.Routh.form} can be obtained
from Theorem~\ref{Theorem.Holtz}, Theorem~\ref{Theorem.connection.Hurwitz.self-interlacing}, and Proposition~\ref{proposition.1} taking
into account Remark~\ref{remark.type.II.0}.}
\end{remark}

Finally, let us introduce the so-called almost generalized Hurwitz polynomials.
\begin{definition}\label{def.almost.general.Hurw.poly}
A real polynomial $p(z)$ called almost generalized Hurwitz of order $\varkappa$ of type~I (type~II) if the polynomial $zp(z)$ is generalized Hurwitz of order $\varkappa+1$ of type~I (resp. type~II).
\end{definition}
\begin{remark}\label{remark.almost.gen}
\emph{Note that any almost generalized Hurwitz polynomial of order~$0$ of type~I is a Hurwitz stable polynomial, while any almost generalized Hurwitz polynomial of type~I of degree $2l$ and of order $l$ is a self-interlacing polynomial of type~II. Also any almost generalized Hurwitz polynomial of type~II of degree $2l+1$ and of order $l$ is a self-interlacing polynomial of type~I.}
\end{remark}

For almost generalized Hurwitz polynomials we have the following basic theorem analogous to Theorem~\ref{Theorem.general.Hurwitz.criterion} (see~\cite{Tyaglov.gen.Hurw}).

\begin{theorem}\label{Theorem.almost.general.Hurwitz.criterion}
The polynomial $p$ given in~(\ref{main.poly}) is
generalized Hurwitz if and only if
\begin{equation}\label{almost.general.Hurwitz.main.det.noneq}
\Delta_{n}(p)>0,\ \Delta_{n-2}(p)>0,\ \Delta_{n-4}(p)>0,\ldots
\end{equation}
The order $\varkappa$ of the polynomial $p$ equals
\begin{equation}\label{almost.general.Hurwitz.poly.10}
\varkappa={\rm V^{F}}(\Delta_{n-1}(p),\Delta_{n-3}(p),\ldots,1).
\end{equation}
where $V^{F}(c_1,\ldots,c_n)$ denotes the number of sign changes in the sequence $\{c_1,\ldots,c_n\}$ calculated in accordance with the Frobenius rule of signs.
\end{theorem}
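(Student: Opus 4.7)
The statement should read ``almost generalized Hurwitz'' (comparing the exponents in~(\ref{almost.general.Hurwitz.main.det.noneq})--(\ref{almost.general.Hurwitz.poly.10}) with~(\ref{general.Hurwitz.main.det.noneq})--(\ref{general.Hurwitz.poly.11}) makes this unambiguous). The plan is to reduce everything to Theorem~\ref{Theorem.general.Hurwitz.criterion} applied to the auxiliary polynomial $q(z):=zp(z)$, whose degree is $n+1$ and which satisfies $q(0)=0$. By Definition~\ref{def.almost.general.Hurw.poly}, $p$ is almost generalized Hurwitz of order $\varkappa$ of type~I if and only if $q$ is generalized Hurwitz of order $\varkappa+1$ of type~I, so the whole theorem will follow once the criterion for $q$ is translated into the Hurwitz data of $p$.

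The first (and essentially only computational) step is to show the determinantal identity
$$
\Delta_j(q)=\Delta_j(p)\quad\text{for}\quad j=1,\ldots,n,\qquad \Delta_{n+1}(q)=0.
$$
Writing $p(z)=z^n+a_1z^{n-1}+\cdots+a_n$ and $q(z)=z^{n+1}+b_1z^n+\cdots+b_{n+1}$, one has $b_i=a_i$ for $i=1,\ldots,n$ and $b_{n+1}=0=a_{n+1}$, so the Hankel-type matrices defining $\Delta_j(q)$ and $\Delta_j(p)$ in~(\ref{delta}) coincide when $j\le n$. For $j=n+1$, the matrix defining $\Delta_{n+1}(q)$ has $b_{n+1}=q(0)=0$ in its lower-right corner and all further coefficients vanishing, which makes the last row identically zero and hence $\Delta_{n+1}(q)=0$.

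Next, I apply Theorem~\ref{Theorem.general.Hurwitz.criterion} to $q$, which has degree $n+1$ and satisfies $q(0)=0$. The positivity condition~(\ref{general.Hurwitz.main.det.noneq}) for $q$ reads $\Delta_n(q)>0,\Delta_{n-2}(q)>0,\ldots$, and the step above identifies it with~(\ref{almost.general.Hurwitz.main.det.noneq}). Since $q(0)=0$, the order of $q$ as a generalized Hurwitz polynomial must be computed with formula~(\ref{general.Hurwitz.poly.11}):
$$
\varkappa+1=V^{F}\bigl(\Delta_{n-1}(q),\Delta_{n-3}(q),\ldots,1\bigr)+1.
$$
Substituting $\Delta_{n-1-2k}(q)=\Delta_{n-1-2k}(p)$ and subtracting~$1$ yields exactly~(\ref{almost.general.Hurwitz.poly.10}). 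The type~II case follows by applying the already-established type~I statement to $p(-z)$ in view of Definitions~\ref{def.general.Hurw.poly.type.II} and~\ref{def.almost.general.Hurw.poly}.

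The only delicate point is the Frobenius bookkeeping: one must make sure that the sequence $\Delta_{n-1}(q),\Delta_{n-3}(q),\ldots,1$ in~(\ref{general.Hurwitz.poly.11}) for $q$ of degree $n+1$ actually terminates at the correct $\Delta_0=1$ or $\Delta_{-1}=1$ to match~(\ref{almost.general.Hurwitz.poly.10}) for $p$, and that replacing the possibly-vanishing $\Delta_{n-1-2k}(q)$ by the same $\Delta_{n-1-2k}(p)$ does not alter the Frobenius count. This is automatic from the identification of the two sequences, but it is the one place where one should be careful about parity of $n$. Everything else is a direct transcription of Theorem~\ref{Theorem.general.Hurwitz.criterion} via Definition~\ref{def.almost.general.Hurw.poly}.
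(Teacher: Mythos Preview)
The paper does not actually prove this theorem; it is quoted from~\cite{Tyaglov.gen.Hurw}, so there is no in-paper proof to compare against. Your reduction via $q(z)=zp(z)$ and Definition~\ref{def.almost.general.Hurw.poly} is correct and is the natural route: the identity $\Delta_j(q)=\Delta_j(p)$ for $j\le n$ (immediate since the coefficient arrays coincide) transfers the criterion of Theorem~\ref{Theorem.general.Hurwitz.criterion} for $q$ of degree $n+1$ directly into~(\ref{almost.general.Hurwitz.main.det.noneq}), and since $q(0)=0$ the order formula~(\ref{general.Hurwitz.poly.11}) gives exactly~(\ref{almost.general.Hurwitz.poly.10}) after cancelling the~$+1$. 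Your observation that the statement should read ``almost generalized Hurwitz'' is also correct.

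One small slip that does not affect the argument: in your justification of $\Delta_{n+1}(q)=0$, it is the last \emph{column} of the Hurwitz matrix that vanishes, not the last row. The last column consists of $b_{2n+1},b_{2n},\ldots,b_{n+1}$, all zero for a degree-$(n+1)$ polynomial with $b_{n+1}=q(0)=0$; the last row, by contrast, contains $b_0=1$ (when $n$ is odd) or $b_1$ (when $n$ is even) at an interior position and is not identically zero. This is harmless, since $\Delta_{n+1}(q)$ never enters the criterion anyway: condition~(\ref{general.Hurwitz.main.det.noneq}) for $q$ of degree $n+1$ begins at $\Delta_n(q)$, and formula~(\ref{general.Hurwitz.poly.11}) begins at $\Delta_{n-1}(q)$.
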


Note that almost generalized Hurwitz polynomials do not vanish at zero, so they order
equal the number of positive simple zeroes. One can easily describe the distribution of zeroes
of almost generalized Hurwitz polynomials from Definitions~\ref{def.general.Hurw.poly} and~\ref{def.almost.general.Hurw.poly}. Moreover, if a real polynomial is generalized Hurwitz
and almost generalized Hurwitz simultaneously, then it is Hurwitz stable.

\setcounter{equation}{0}
\section{Direct and inverse spectral problems for Schwarz matrices}\label{section:spectral.problems}

Let us again consider the Schwarz matrix
\begin{equation}\label{Schwarz.matrix}
J_n=
\left[\begin{array}{cccccc}
    -b_0 & 1 &  0 &\dots&   0   & 0 \\
    -b_1 & 0 &1 &\dots&   0   & 0 \\
     0  &-b_2 & 0 &\dots&   0   & 0 \\
    \vdots&\vdots&\vdots&\ddots&\vdots&\vdots\\
     0  &  0  &  0  &\dots&0& 1\\
     0  &  0  &  0  &\dots&-b_{n-1}&0\\
\end{array}\right]
\end{equation}
with all $b_k$ nonzero, and denote by $p(z)$ its characteristic polynomial, that is, $p(z)=\det(zE_n-J_n)$. From formul\ae~(\ref{Formulae.1})
it is easy to get the following
\begin{equation}\label{Formulae.odd}
b_0=\Delta_1(p),\quad b_{2j-1}b_{2j}=\frac{\Delta_{2j-3}(p)\Delta_{2j+1}(p)}{\Delta_{2j-1}^2(p)},\qquad j=1,\ldots,\left[\frac{n-1}2\right],
\end{equation}
and
\begin{equation}\label{Formulae.even}
b_{2j}b_{2j+1}=\frac{\Delta_{2j-2}(p)\Delta_{2j+2}(p)}{\Delta_{2j}^2(p)},\qquad j=0,1,\ldots,\left[\frac{n-2}2\right],
\end{equation}
where $\Delta_{-2}(p)\equiv1$, and $[\alpha]$ denotes the maximal integer not exceeding $\alpha$.

From the formul\ae~(\ref{Formulae.odd})--(\ref{Formulae.even}) and from Theorems~\ref{Theorem.Schwarz.matrix.Wall} and~\ref{Theorem.general.Hurwitz.criterion} it is easy to obtain
the following fact.
\begin{theorem}\label{Theorem.gen.Hurw.Schwarz.even}
Let the matrix $J_n$ be given in~(\ref{Schwarz.matrix}), and $n=2l$. The characteristic polynomial
$p$ of the matrix $J_n$ is generalized Hurwitz of type~I if and only~if
\begin{equation}\label{Gen.Hurw.even.b.ineq.1}
b_0>0,\,b_1b_2>0,\,b_3b_4>0,\ldots,\,b_{n-3}b_{n-2}>0.
\end{equation}
The order $\varkappa$ of the polynomial $p$ is equal to the number of negative terms in the sequence
\begin{equation}\label{Gen.Hurw.even.b.ineq.2}
b_0b_1,\  b_0b_1b_2b_3,\  b_0b_1b_2b_3b_4b_5,\ldots, \ b_0b_1\cdots b_{n-1}.
\end{equation}

Conversely, let $\lambda_1,\ldots,\lambda_n$ be a sequence of complex numbers such that the polynomial $p(z)=\prod_{k=1}^n(z-\lambda_k)$ is generalized Hurwitz of type~I of order $\varkappa$ and satisfies the inequalities~(\ref{Hurw.det.ineq}). Then there exists a unique Schwarz matrix $J_n$ of the form~(\ref{Schwarz.matrix}) with entries $b_k$ satisfying~(\ref{Gen.Hurw.even.b.ineq.1})
such that the number of negative terms in the sequence~(\ref{Gen.Hurw.even.b.ineq.2}) is equal to~$\varkappa$, and $\sigma(J_n)=\{\lambda_1,\ldots,\lambda_n\}$.
\end{theorem}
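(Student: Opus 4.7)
The plan is to reduce both assertions to Theorem~\ref{Theorem.general.Hurwitz.criterion} through the formulas~(\ref{Formulae.odd})--(\ref{Formulae.even}) relating the entries $b_k$ of $J_n$ to the Hurwitz determinants of~$p$. Theorem~\ref{Theorem.Schwarz.matrix.Wall} already supplies $\Delta_j(p)\neq0$ for $j=1,\ldots,n$; in particular $p(0)\neq0$, so formula~(\ref{general.Hurwitz.poly.10}) applies and the Frobenius count $V^F$ may be replaced by the ordinary count $v$ throughout.

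For the sign pattern~(\ref{Gen.Hurw.even.b.ineq.1}), Theorem~\ref{Theorem.general.Hurwitz.criterion} asserts that $p$ is generalized Hurwitz if and only if $\Delta_1(p),\Delta_3(p),\ldots,\Delta_{n-1}(p)>0$. Since $b_0=\Delta_1(p)$ and $b_{2j-1}b_{2j}=\Delta_{2j-3}(p)\Delta_{2j+1}(p)/\Delta_{2j-1}^2(p)$ by~(\ref{Formulae.odd}), a straightforward induction on $j$ (using that $\Delta_{2j-1}^2>0$ and that the positivity of $\Delta_{2j-3}(p)$ has already been established) shows that $b_0>0,\,b_1b_2>0,\ldots,b_{n-3}b_{n-2}>0$ is equivalent to the positivity of all odd-indexed Hurwitz determinants $\Delta_1(p),\Delta_3(p),\ldots,\Delta_{n-1}(p)$.

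To compute the order, set $\epsilon_0:=1$ and $\epsilon_j:=\sgn\Delta_{2j}(p)$ for $j=1,\ldots,l$. Theorem~\ref{Theorem.general.Hurwitz.criterion} together with $V^{F}=v$ gives $\varkappa=v(\epsilon_l,\epsilon_{l-1},\ldots,\epsilon_0)=v(\epsilon_0,\epsilon_1,\ldots,\epsilon_l)$. Rewriting~(\ref{Formulae.even}) as $\Delta_{2j+2}(p)\Delta_{2j-2}(p)=b_{2j}b_{2j+1}\,\Delta_{2j}^2(p)$ yields $\epsilon_{j-1}\epsilon_{j+1}=\sgn(b_{2j}b_{2j+1})$ for $j=0,1,\ldots,l-1$, with the convention $\epsilon_{-1}:=1$ coming from $\Delta_{-2}(p)=1$. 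Setting $T_k:=b_0b_1\cdots b_{2k-1}$, so that $T_k=T_{k-1}\cdot b_{2k-2}b_{2k-1}$, a short induction on $k$ produces the key identity $\sgn T_k=\epsilon_{k-1}\epsilon_k$: indeed, assuming $\sgn T_{k-1}=\epsilon_{k-2}\epsilon_{k-1}$, one gets $\sgn T_k=\epsilon_{k-2}\epsilon_{k-1}\cdot\epsilon_{k-2}\epsilon_k=\epsilon_{k-1}\epsilon_k$. Hence a sign change between $\epsilon_{k-1}$ and $\epsilon_k$ occurs exactly when $T_k<0$, so $\varkappa$ equals the number of negative terms in~(\ref{Gen.Hurw.even.b.ineq.2}).

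The converse is then formal via Theorem~\ref{Theorem.Schwarz.matrix.Wall}: given $p(z)=\prod_{k=1}^n(z-\lambda_k)$ generalized Hurwitz of order $\varkappa$ and satisfying~(\ref{Hurw.det.ineq}), that theorem supplies a unique Schwarz matrix $J_n$ with characteristic polynomial~$p$, and applying the direct part just proved to this $J_n$ yields~(\ref{Gen.Hurw.even.b.ineq.1}) and the correct count in~(\ref{Gen.Hurw.even.b.ineq.2}). The only nonroutine step in the whole argument is the identity $\sgn T_k=\epsilon_{k-1}\epsilon_k$; everything else is a clean translation through~(\ref{Formulae.odd})--(\ref{Formulae.even}) and the generalized Hurwitz theorem.
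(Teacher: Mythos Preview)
Your proof is correct and follows essentially the same route as the paper: both reduce everything to Theorems~\ref{Theorem.Schwarz.matrix.Wall} and~\ref{Theorem.general.Hurwitz.criterion} via formul\ae~(\ref{Formulae.odd})--(\ref{Formulae.even}). The one cosmetic difference is that the paper obtains your key identity in a single stroke by telescoping~(\ref{Formulae.even}) to get $\prod_{k=0}^{2i-1}b_k=\Delta_{2i}(p)/\Delta_{2i-2}(p)$, which immediately gives $\sgn T_i=\epsilon_{i-1}\epsilon_i$ without the separate sign induction.
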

\begin{proof}
Let $p$ be the characteristic polynomial of the matrix $J_n$. It satisfies~(\ref{Hurw.det.ineq})
by assumption. According to Theorem~\ref{Theorem.general.Hurwitz.criterion}, it is generalized
Hurwitz of type~I if and only if $\Delta_{2i-1}(p)>0$ for $i=1,\ldots,l$.
By~(\ref{Formulae.odd}), these inequalities are equivalent to~(\ref{Gen.Hurw.even.b.ineq.1}).
Furthermore, from~(\ref{Formulae.even}) we have
\begin{equation}\label{Theorem.gen.Hurw.Schwarz.even.proof.1}
\prod\limits_{k=0}^{2i-1}b_k=\frac{\Delta_{2i}(p)}{\Delta_{2i-2}(p)},\qquad i=1,\ldots,l.
\end{equation}
By Theorem~\ref{Theorem.general.Hurwitz.criterion}, the order of the generalized Hurwitz polynomial $p$ is equal to the number of sign changes in the sequence $\Delta_{2}(p)$, $\Delta_{4}(p)$, \ldots, $\Delta_{2l}(p)$. But from~(\ref{Theorem.gen.Hurw.Schwarz.even.proof.1}) we obtain that each sign change in this sequence corresponds to a negative number in the sequence~(\ref{Gen.Hurw.even.b.ineq.2}).

Conversely, if the complex numbers $\lambda_1,\ldots,\lambda_n$ are such that the polynomial $p(z)=\prod_{k=1}^n(z-\lambda_k)$ is generalized Hurwitz of type~I of order $\varkappa$ satisfying the inequalities~(\ref{Hurw.det.ineq}), then by Theorems~\ref{Theorem.Schwarz.matrix.Wall} and~\ref{Theorem.general.Hurwitz.criterion} and by formul\ae~(\ref{Formulae.odd})--(\ref{Formulae.even}), there exists a unique matrix $J_n$ of the form~(\ref{main.matrix}) satisfying the inequalities~(\ref{Gen.Hurw.even.b.ineq.1}) and with~$\varkappa$ negative numbers in the sequence~(\ref{Gen.Hurw.even.b.ineq.2}) such that its
characteristic polynomial~is~$p$.
\end{proof}

Analogously, using formul\ae~(\ref{Formulae.odd})--(\ref{Formulae.even}) and Theorems~\ref{Theorem.Schwarz.matrix.Wall},~\ref{Theorem.general.Hurwitz.criterion},
and~\ref{Theorem.almost.general.Hurwitz.criterion} one can easily establish the following theorems.
\begin{theorem}\label{Theorem.gen.Hurw.Schwarz.odd}
Let the matrix $J_n$ be given in~(\ref{Schwarz.matrix}), and $n=2l+1$. The characteristic
polynomial $p$ of the matrix $J_n$ is generalized Hurwitz of type~I~if~and~only~if
\begin{equation}\label{Gen.Hurw.odd.b.ineq.1}
b_0b_1>0,\,b_2b_3>0,\,b_4b_5>0,\ldots,\,b_{n-3}b_{n-2}>0.
\end{equation}
The order $\varkappa$ of the polynomial $p$ is equal to the number of negative terms in the sequence
\begin{equation}\label{Gen.Hurw.odd.b.ineq.2}
b_0,\  b_0b_1b_2,\  b_0b_1b_2b_3b_4,\ldots, \ b_0b_1\cdots b_{n-1}.
\end{equation}

Conversely, let $\lambda_1,\ldots,\lambda_n$ be a sequence of complex numbers such that the polynomial $p(z)=\prod_{k=1}^n(z-\lambda_k)$ is generalized Hurwitz of type~I of order $\varkappa$ and satisfies the inequalities~(\ref{Hurw.det.ineq}). Then there exists a unique Schwarz matrix $J_n$ of the form~(\ref{Schwarz.matrix}) with entries $b_k$ satisfying~(\ref{Gen.Hurw.odd.b.ineq.1})
such that the number of negative terms in the sequence~(\ref{Gen.Hurw.odd.b.ineq.2}) is equal to~$\varkappa$, and $\sigma(J_n)=\{\lambda_1,\ldots,\lambda_n\}$.
\end{theorem}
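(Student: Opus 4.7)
The plan is to follow the template of the proof of Theorem~\ref{Theorem.gen.Hurw.Schwarz.even}, reindexed for odd $n = 2l+1$: the sign conditions on the $b_k$'s will be controlled by the formulas~(\ref{Formulae.even}), while the order $\varkappa$ will be read off from a telescoping version of~(\ref{Formulae.1}).

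First I would invoke Theorem~\ref{Theorem.general.Hurwitz.criterion} in the odd case: under the standing hypothesis~(\ref{Hurw.det.ineq}), the polynomial $p$ of degree $n = 2l+1$ is generalized Hurwitz of type~I if and only if $\Delta_2(p),\Delta_4(p),\ldots,\Delta_{2l}(p)$ are all positive. Now~(\ref{Formulae.even}) reads, for $j=0,1,\ldots,l-1$,
$$
b_{2j}b_{2j+1}=\frac{\Delta_{2j-2}(p)\,\Delta_{2j+2}(p)}{\Delta_{2j}(p)^2},
$$
with the convention $\Delta_{-2}=\Delta_0=1$. Since the denominator is strictly positive, the chain~(\ref{Gen.Hurw.odd.b.ineq.1}) is equivalent to $\Delta_{2j-2}(p)\,\Delta_{2j+2}(p)>0$ for each $j$, and an inductive walk starting from $\Delta_{-2}=\Delta_0=1$ unfolds these two-step positivity conditions into $\Delta_2(p),\Delta_4(p),\ldots,\Delta_{2l}(p)>0$, securing the first equivalence.

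For the order, I would establish the telescoping identity $\prod_{k=0}^{m} b_k = \Delta_{m+1}(p)/\Delta_{m-1}(p)$ directly from~(\ref{Formulae.1}) by cancellation, with $\Delta_{-1}=\Delta_0=1$. Specializing to $m=2i$ for $i=0,1,\ldots,l$ rewrites the sequence~(\ref{Gen.Hurw.odd.b.ineq.2}) as the chain of ratios $\Delta_1(p)/\Delta_{-1},\ \Delta_3(p)/\Delta_1(p),\ \ldots,\ \Delta_n(p)/\Delta_{n-2}(p)$, whose negative entries are in bijection with the sign changes in the sequence $1,\Delta_1(p),\Delta_3(p),\ldots,\Delta_n(p)$. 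Because all Hurwitz determinants are nonzero, Frobenius sign changes coincide with ordinary ones, so formula~(\ref{general.Hurwitz.poly.10}) of Theorem~\ref{Theorem.general.Hurwitz.criterion} identifies this count with $\varkappa$.

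The converse is then immediate: given eigenvalues $\lambda_1,\ldots,\lambda_n$ whose characteristic polynomial $p$ is generalized Hurwitz of type~I of order $\varkappa$ and satisfies~(\ref{Hurw.det.ineq}), Theorem~\ref{Theorem.Schwarz.matrix.Wall} produces a unique Schwarz matrix $J_n$ with characteristic polynomial $p$, and the direct part applied to this $J_n$ delivers both~(\ref{Gen.Hurw.odd.b.ineq.1}) and the equality of $\varkappa$ with the number of negative terms in~(\ref{Gen.Hurw.odd.b.ineq.2}). I do not expect a real obstacle here; the only bookkeeping care is to track the parity of indices so that the odd-$n$ versions of~(\ref{Formulae.odd})--(\ref{Formulae.even}), the telescoping identity, and Theorem~\ref{Theorem.general.Hurwitz.criterion} line up correctly, whereas in Theorem~\ref{Theorem.gen.Hurw.Schwarz.even} it was the even-indexed Hurwitz minors that played these roles.
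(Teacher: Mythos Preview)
Your proposal is correct and follows precisely the route the paper indicates: it is the analogue of the proof of Theorem~\ref{Theorem.gen.Hurw.Schwarz.even}, with the roles of the odd- and even-indexed Hurwitz determinants swapped because $n=2l+1$, and with formul\ae~(\ref{Formulae.even}) now governing the positivity condition~(\ref{Gen.Hurw.odd.b.ineq.1}) while the telescoped version of~(\ref{Formulae.1}) controls the order via~(\ref{general.Hurwitz.poly.10}). The only cosmetic difference is that you derive the telescoping identity $\prod_{k=0}^{m} b_k = \Delta_{m+1}(p)/\Delta_{m-1}(p)$ directly from~(\ref{Formulae.1}) rather than from~(\ref{Formulae.odd}), which is entirely equivalent.
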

\begin{theorem}\label{Theorem.almost.gen.Hurw.Schwarz.even}
Let the matrix $J_n$ be given in~(\ref{Schwarz.matrix}), and $n=2l$. The characteristic
polynomial $p$ of the matrix $J_n$ is almost generalized Hurwitz of type~I if and only~if
\begin{equation}\label{Al.Gen.Hurw.even.b.ineq.1}
b_0b_1>0,\,b_2b_3>0,\,b_4b_5>0,\ldots,\,b_{n-2}b_{n-1}>0.
\end{equation}
The order $\varkappa$ of the polynomial $p$ is equal to the number of negative terms in the sequence
\begin{equation}\label{Al.Gen.Hurw.even.b.ineq.2}
b_0,\  b_0b_1b_2,\  b_0b_1b_2b_3b_4,\ldots, \ b_0b_1\cdots b_{n-2}.
\end{equation}

Conversely, let $\lambda_1,\ldots,\lambda_n$ be a sequence of complex numbers such that the polynomial $p(z)=\prod_{k=1}^n(z-\lambda_k)$ is almost generalized Hurwitz of type~I of order $\varkappa$ and satisfies the inequalities~(\ref{Hurw.det.ineq}). Then there exists a unique Schwarz matrix $J_n$ of the form~(\ref{Schwarz.matrix}) with entries $b_k$ satisfying~(\ref{Al.Gen.Hurw.even.b.ineq.1})
such that the number of negative terms in the sequence~(\ref{Al.Gen.Hurw.even.b.ineq.2}) is equal to~$\varkappa$, and $\sigma(J_n)=\{\lambda_1,\ldots,\lambda_n\}$.
\end{theorem}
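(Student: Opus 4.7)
The plan is to prove Theorem~\ref{Theorem.almost.gen.Hurw.Schwarz.even} by direct analogy with the proof of Theorem~\ref{Theorem.gen.Hurw.Schwarz.even}, replacing the role of the odd-indexed Hurwitz determinants (which control gen.~Hurwitz type~I) by the even-indexed ones (which control \emph{almost} gen.~Hurwitz type~I per Theorem~\ref{Theorem.almost.general.Hurwitz.criterion}), and tracking products of the $b_k$ through formul\ae~(\ref{Formulae.odd})--(\ref{Formulae.even}).

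First, I would establish the equivalence of~(\ref{Al.Gen.Hurw.even.b.ineq.1}) with the positivity of the even-indexed Hurwitz determinants $\Delta_{2}(p),\Delta_4(p),\ldots,\Delta_n(p)$. From~(\ref{Formulae.even}) we get $\Delta_{2j+2}(p)=b_{2j}b_{2j+1}\Delta_{2j}^2(p)/\Delta_{2j-2}(p)$, and since $\Delta_{2j}^2(p)>0$, induction on $j$ (starting from $\Delta_0(p)=\Delta_{-2}(p)=1$) gives $\sgn\Delta_{2j+2}(p)=\sgn(b_{2j}b_{2j+1})\cdot\sgn\Delta_{2j-2}(p)$, whence all the $\Delta_{2j}(p)$ are positive precisely when $b_{2j}b_{2j+1}>0$ for $j=0,1,\ldots,l-1$, which is~(\ref{Al.Gen.Hurw.even.b.ineq.1}). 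By Theorem~\ref{Theorem.almost.general.Hurwitz.criterion}, this is the almost gen.~Hurwitz condition.

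Next I would derive the key product identity
\[
\prod_{k=0}^{2i}b_k=\frac{\Delta_{2i+1}(p)}{\Delta_{2i-1}(p)},\qquad i=0,1,\ldots,l-1,
\]
(with $\Delta_{-1}(p)=1$) by combining the two formul\ae~(\ref{Formulae.odd})--(\ref{Formulae.even}): starting from $b_0=\Delta_1(p)$ and multiplying alternately by the relations for $b_{2j-1}b_{2j}$ and $b_{2j}b_{2j+1}$ telescopes to the above. The terms of the sequence~(\ref{Al.Gen.Hurw.even.b.ineq.2}) are therefore exactly the ratios $\Delta_{2i+1}(p)/\Delta_{2i-1}(p)$, so a term is negative if and only if there is a sign change between the consecutive determinants $\Delta_{2i-1}(p)$ and $\Delta_{2i+1}(p)$. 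Reading the list $\Delta_{2l-1}(p),\Delta_{2l-3}(p),\ldots,\Delta_1(p),1$ backwards, the total number of negative terms in~(\ref{Al.Gen.Hurw.even.b.ineq.2}) coincides with $v(\Delta_{2l-1}(p),\Delta_{2l-3}(p),\ldots,\Delta_1(p),1)$, which by Theorem~\ref{Theorem.almost.general.Hurwitz.criterion} (all Hurwitz determinants nonzero by~(\ref{Hurw.det.ineq}), so the Frobenius count reduces to the standard $v$) is precisely the order~$\varkappa$.

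For the converse, I would invoke Theorem~\ref{Theorem.Schwarz.matrix.Wall}: any polynomial~$p$ satisfying~(\ref{Hurw.det.ineq}) is the characteristic polynomial of a unique Schwarz matrix $J_n$ of the form~(\ref{Schwarz.matrix}), whose entries are given by formul\ae~(\ref{Formulae.1}); the two equivalences just proved then translate the hypothesis ``$p$ almost gen.~Hurwitz type~I of order $\varkappa$'' into the sign conditions~(\ref{Al.Gen.Hurw.even.b.ineq.1}) and the count of negative terms in~(\ref{Al.Gen.Hurw.even.b.ineq.2}). I expect the main (but still routine) obstacle to be verifying the telescoping identity for $\prod_{k=0}^{2i}b_k$ carefully enough to avoid index-shift errors, especially at the boundary cases $i=0$ and $i=l-1$; everything else follows the template of the proof of Theorem~\ref{Theorem.gen.Hurw.Schwarz.even} essentially verbatim.
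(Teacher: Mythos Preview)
Your proposal is correct and follows exactly the approach the paper intends: it is the direct analogue of the proof of Theorem~\ref{Theorem.gen.Hurw.Schwarz.even}, with Theorem~\ref{Theorem.almost.general.Hurwitz.criterion} replacing Theorem~\ref{Theorem.general.Hurwitz.criterion} and the roles of even- and odd-indexed Hurwitz determinants swapped. One small clarification: the telescoping identity $\prod_{k=0}^{2i}b_k=\Delta_{2i+1}(p)/\Delta_{2i-1}(p)$ needs only formula~(\ref{Formulae.odd}) (start from $b_0=\Delta_1(p)$ and successively multiply by $b_{2j-1}b_{2j}$), not both~(\ref{Formulae.odd}) and~(\ref{Formulae.even}) ``alternately''; formula~(\ref{Formulae.even}) is used solely in the first step to relate~(\ref{Al.Gen.Hurw.even.b.ineq.1}) to the positivity of the even-indexed $\Delta_{2j}(p)$.
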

\begin{theorem}\label{Theorem.almost.gen.Hurw.Schwarz.odd}
Let the matrix $J_n$ be given in~(\ref{Schwarz.matrix}), and $n=2l+1$. The characteristic
polynomial$p$ of the matrix $J_n$ is almost generalized Hurwitz of type~I~if~and~only~if
\begin{equation}\label{Al.Gen.Hurw.odd.b.ineq.1}
b_0>0,\,b_1b_2>0,\,b_3b_4>0,\ldots,\,b_{n-2}b_{n-1}>0.
\end{equation}
The order $\varkappa$ of the polynomial $p$ is equal to the number of negative terms in the sequence
\begin{equation}\label{Al.Gen.Hurw.odd.b.ineq.2}
b_0b_1,\  b_0b_1b_2b_3,\  b_0b_1b_2b_3b_4b_5,\ldots, \ b_0b_1\cdots b_{n-2}.
\end{equation}

Conversely, let $\lambda_1,\ldots,\lambda_n$ be a sequence of complex numbers such that the polynomial $p(z)=\prod_{k=1}^n(z-\lambda_k)$ is almost generalized Hurwitz of type~I of order $\varkappa$ and satisfies the inequalities~(\ref{Hurw.det.ineq}). Then there exists a unique Schwarz matrix $J_n$ of the form~(\ref{Schwarz.matrix}) with entries $b_k$ satisfying~(\ref{Al.Gen.Hurw.odd.b.ineq.1})
such that the number of negative terms in the sequence~(\ref{Al.Gen.Hurw.odd.b.ineq.2}) is equal to~$\varkappa$, and $\sigma(J_n)=\{\lambda_1,\ldots,\lambda_n\}$.
\end{theorem}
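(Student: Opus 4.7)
The plan is to mirror the proof of Theorem~\ref{Theorem.gen.Hurw.Schwarz.even} almost verbatim, swapping the roles of the even and odd Hurwitz determinants and invoking the almost-Hurwitz criterion (Theorem~\ref{Theorem.almost.general.Hurwitz.criterion}) in place of the generalized-Hurwitz criterion. Since $n=2l+1$ is odd, the determinants that appear in the positivity condition~(\ref{almost.general.Hurwitz.main.det.noneq}) of Theorem~\ref{Theorem.almost.general.Hurwitz.criterion} are $\Delta_{2l+1}(p),\Delta_{2l-1}(p),\ldots,\Delta_3(p),\Delta_1(p)$, that is, the \emph{odd-indexed} Hurwitz determinants. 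These are exactly the ones controlled by formulae~(\ref{Formulae.odd}).

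First I would verify the forward direction. Using $b_0=\Delta_1(p)$ and $b_{2j-1}b_{2j}=\Delta_{2j-3}(p)\Delta_{2j+1}(p)/\Delta_{2j-1}(p)^2$ for $j=1,\ldots,l$, an easy induction on $j$ shows that the inequalities~(\ref{Al.Gen.Hurw.odd.b.ineq.1}) are equivalent to $\Delta_{2j+1}(p)>0$ for $j=0,1,\ldots,l$, which by Theorem~\ref{Theorem.almost.general.Hurwitz.criterion} is precisely the statement that $p$ is almost generalized Hurwitz of type~I.

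Next I would handle the order $\varkappa$. Using the second family of formulae~(\ref{Formulae.even}), namely $b_{2j}b_{2j+1}=\Delta_{2j-2}(p)\Delta_{2j+2}(p)/\Delta_{2j}(p)^2$, the products telescope: a short induction gives
\[
\prod_{k=0}^{2i+1} b_k \;=\; \frac{\Delta_{2i+2}(p)}{\Delta_{2i}(p)},\qquad i=0,1,\ldots,l-1,
\]
with $\Delta_0(p)\equiv1$. Hence the number of negative terms in the sequence~(\ref{Al.Gen.Hurw.odd.b.ineq.2}) equals $v(1,\Delta_2(p),\Delta_4(p),\ldots,\Delta_{2l}(p))$. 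Since all Hurwitz determinants are nonzero by assumption (so $V^{F}=v$), Theorem~\ref{Theorem.almost.general.Hurwitz.criterion} identifies this count with the order $\varkappa$.

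For the converse, given the $\lambda_i$ and the corresponding almost generalized Hurwitz polynomial $p(z)=\prod_{k=1}^n(z-\lambda_k)$ satisfying~(\ref{Hurw.det.ineq}), Theorem~\ref{Theorem.Schwarz.matrix.Wall} produces a unique Schwarz matrix $J_n$ of the form~(\ref{Schwarz.matrix}) having $p$ as characteristic polynomial, and the forward direction applied to this $J_n$ supplies the sign pattern~(\ref{Al.Gen.Hurw.odd.b.ineq.1}) and the correct count of negatives in~(\ref{Al.Gen.Hurw.odd.b.ineq.2}). No genuine obstacle is expected; the only point that demands attention is the bookkeeping of the telescoping product and the shift between the odd-indexed and even-indexed Hurwitz determinants forced by the parity of $n$.
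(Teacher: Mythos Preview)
Your proposal is correct and follows precisely the route the paper indicates: the paper does not write out a proof for this theorem, stating only that it is established analogously to Theorem~\ref{Theorem.gen.Hurw.Schwarz.even} using formul\ae~(\ref{Formulae.odd})--(\ref{Formulae.even}) together with Theorems~\ref{Theorem.Schwarz.matrix.Wall} and~\ref{Theorem.almost.general.Hurwitz.criterion}. Your telescoping identity $\prod_{k=0}^{2i+1}b_k=\Delta_{2i+2}(p)/\Delta_{2i}(p)$ is the same as the paper's formula~(\ref{Theorem.gen.Hurw.Schwarz.even.proof.1}) up to an index shift, and your handling of the parity swap (odd-indexed $\Delta_j$ governing~(\ref{Al.Gen.Hurw.odd.b.ineq.1}), even-indexed $\Delta_j$ governing~(\ref{Al.Gen.Hurw.odd.b.ineq.2})) is exactly what is required.
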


\begin{remark}\label{remark.type.II}
\emph{It is also easy to prove and formulate an analogous theorems for (almost) generalized Hurwitz polynomials of type~II. But it is not necessary, since if the characteristic polynomial of a matrix $J_n$ is (almost) generalized Hurwitz of type~II, then the characteristic polynomial of a matrix $-J_n$ is (almost) generalized Hurwitz of type~I. But changing the sign of the matrix will change,
in fact, just the sign of the entry $b_0$, since the characteristic polynomial of tridiagonal matrices depends on the products of the $(i,i+1)$th and $(i+1,i)$th entries rather than on these entries separately. So if we change their
signs simultaneously, this does not change the characteristic polynomial~\cite[Chapter~II]{KreinGantmaher}. Thus, if we have a matrix $J_n$ of the form~(\ref{Schwarz.matrix}) such that $b_0<0$ \textit{and} $b_0b_1<0$, we should consider the matrix $-J_n$ and apply one of Theorems~\ref{Theorem.gen.Hurw.Schwarz.even}--\ref{Theorem.almost.gen.Hurw.Schwarz.odd} (if any).}
\end{remark}

\setcounter{equation}{0}

\section{Examples}\label{section:examples}

In this section, we show how the results of the previous section can be used for certain sign patterns of the Schwarz matrix~(\ref{Schwarz.matrix}).

Consider the following matrix
\begin{equation}\label{Schwarz.matrix.2}
S_n=
\left[\begin{array}{ccccccccccc}
    -a   & 1   & 0 &\dots&  0  &   0   & 0 & 0 &\dots&   0   & 0\\
    -c_1 & 0   & 1 &\dots&  0  &   0   & 0 & 0 &\dots&   0   & 0\\
     0   &-c_2 & 0 &\dots&  0  &   0   & 0 & 0 &\dots&   0   & 0\\
    \vdots&\vdots&\vdots&\ddots&\vdots&\vdots&\vdots&\vdots&\ddots&\vdots&\vdots\\
     0   &  0  & 0 &\dots&-c_k &   0   & 1 & 0 &\dots&   0   & 0\\
     0   &  0  & 0 &\dots&  0  &c_{k-1}& 0 & 1 &\dots&   0   & 0\\
    \vdots&\vdots&\vdots&\ddots&\vdots&\vdots&\vdots&\vdots&\ddots&\vdots&\vdots\\
     0   &  0  & 0 &\dots&  0  &   0   & 0 & 0 &\dots&   0   & 1\\
     0   &  0  & 0 &\dots&  0  &   0   & 0 & 0 &\dots&c_{n-1}& 0\\
\end{array}\right],
\end{equation}
where $a\in\mathbb{R}\backslash\{0\}$, and $c_k>0$ for $k=1,\ldots,n-1$.

\begin{theorem}\label{Theorem.my.old}
Let $p(z)$ be the characteristic polynomial of the matrix $S_n$:
$$
p(z)=\det(zE_n-S_n).
$$
\begin{itemize}
\item If $n=2l+1$ and $k=2m+1$ or $n=2l$ and $k=2m$, then $p(z)$ is generalized Hurwitz of order $\varkappa=l-m$ of type~I (type~II) provided $a>0$ (resp. $a<0$).
\item If $n=2l+1$ and $k=2m$ or $n=2l$ and $k=2m-1$, then $p(z)$ is almost generalized Hurwitz of order $\varkappa=l-m$ of type~I (type~II) provided $a>0$ (resp. $a<0$).
\end{itemize}
\end{theorem}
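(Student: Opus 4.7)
The plan is to view $S_n$ as the Schwarz matrix~(\ref{Schwarz.matrix}) whose parameters are $b_0=a$, $b_j=c_j>0$ for $1\le j\le k$, and $b_j=-c_j<0$ for $k+1\le j\le n-1$. First I would reduce to the case $a>0$: by Remark~\ref{remark.type.II}, the matrix $-S_n$ has the same characteristic polynomial as the Schwarz matrix obtained from $S_n$ by flipping only the sign of $a$. Hence, once the type~I statement is established under the assumption $a>0$, the corresponding type~II statement for $a<0$ follows from Definition~\ref{def.general.Hurw.poly.type.II} and its obvious analogue for almost generalized Hurwitz polynomials. Thus the problem reduces to four separate verifications with $a>0$.

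For each of the four parity combinations of $n$ and $k$, I would apply the matching theorem from Section~\ref{section:spectral.problems}: Theorem~\ref{Theorem.gen.Hurw.Schwarz.even} or Theorem~\ref{Theorem.gen.Hurw.Schwarz.odd} when $n-k$ is even, and Theorem~\ref{Theorem.almost.gen.Hurw.Schwarz.even} or Theorem~\ref{Theorem.almost.gen.Hurw.Schwarz.odd} when $n-k$ is odd. The crucial point is that the unique sign change in the sequence $b_0,b_1,\ldots,b_{n-1}$ occurs between $b_k$ and $b_{k+1}$, and the pairing of consecutive indices in each theorem's positivity conditions, either $(b_{2j},b_{2j+1})$ or $(b_{2j+1},b_{2j+2})$, is chosen precisely so that the bad product $b_k b_{k+1}$ does not appear in the list. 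Together with $b_0=a>0$, this ensures that every required positivity condition holds, so the hypothesis of the corresponding theorem is satisfied.

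To compute the order $\varkappa$, I would count the number of negative terms in the product sequence prescribed by the relevant theorem. The sequence has the form $P_i=b_0b_1\cdots b_{r_i}$ with $r_i$ growing by $2$ at each step. While $r_i\le k$ all factors are positive and $P_i>0$; the first step with $r_i\ge k+1$ introduces exactly one negative factor, so $P_i<0$; and each subsequent step multiplies by a pair of negative factors, keeping the product negative. A short index computation in each of the four subcases shows that the number of indices with $r_i\ge k+1$ equals $l-m$, which is the value of $\varkappa$ asserted by the theorem.

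The hard part will be the bookkeeping rather than any deeper idea: one has to verify, in each of the four parity subcases, the exact form of the product sequence, the index at which the sign flip first appears, and the fact that the parity matching between $n$ and $k$ genuinely keeps $b_k b_{k+1}$ out of the positivity hypotheses. Once these combinatorial details are in place, the theorem is a direct application of Theorems~\ref{Theorem.gen.Hurw.Schwarz.even}--\ref{Theorem.almost.gen.Hurw.Schwarz.odd} together with the symmetry supplied by Remark~\ref{remark.type.II}.
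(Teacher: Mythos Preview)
Your proposal is correct and amounts to the same argument as the paper's, just packaged one level higher: the paper reads off the signs of the Hurwitz determinants $\Delta_j(p)$ directly from formul\ae~(\ref{Formulae.1}) and then invokes Theorems~\ref{Theorem.general.Hurwitz.criterion} and~\ref{Theorem.almost.general.Hurwitz.criterion}, whereas you invoke Theorems~\ref{Theorem.gen.Hurw.Schwarz.even}--\ref{Theorem.almost.gen.Hurw.Schwarz.odd}, which are precisely the translation of those determinant conditions into conditions on the $b_k$'s. Your observation that the parity matching between $n$ and $k$ keeps the single mixed-sign product $b_kb_{k+1}$ out of the relevant positivity list, and your count of $l-m$ negative terms in the product sequence, are exactly the combinatorics hidden inside the paper's one-line claim~(\ref{Theorem.my.old.ineq.1}).
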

\begin{proof}
Without loss of generality suppose that $a>0$ (see Remark~\ref{remark.type.II}).
From the conditions of the theorem and from the formul\ae~(\ref{Formulae.1}), we obtain that the
characteristic polynomial $p$ of the matrix~$S_n$ satisfies the inequalities
\begin{equation}\label{Theorem.my.old.ineq.1}
\Delta_{k+2+4i}(p)<0,\qquad i=0,1,\ldots,\left[\frac{n-k-2}4\right],
\end{equation}
while all other Hurwitz determinants of $p$ are positive. The statement of the theorem now follows
from these inequalities and from Theorems~\ref{Theorem.general.Hurwitz.criterion} and~\ref{Theorem.almost.general.Hurwitz.criterion}.
\end{proof}

Converse theorem can also be established provided the given polynomial to satisfy the
inequalities~(\ref{Theorem.my.old.ineq.1}) while its other Hurwitz determinants are positive.
\begin{theorem}\label{Theorem.my.old.converse}
Let $\lambda_1,\ldots,\lambda_n$ be a sequence of complex numbers such that the
polynomial $p(z)=\prod\limits_{i=1}^n(z-\lambda_i)$ is a generalized Hurwitz polynomial
of order $\varkappa$ of type~I such that
\begin{equation}\label{Theorem.my.old.converse.ineq}
\Delta_{n-2\varkappa+2+4i}(p)<0,\qquad i=0,1,\ldots,\left[\frac{\varkappa-1}2\right],
\end{equation}
and other $\Delta_j(p)$ are positive. Then there exists a unique Schwarz matrix $S_n$ of the form~(\ref{Schwarz.matrix.2}) with $a>0$ and $k=n-2\varkappa$ such that $\sigma(S_n)=\{\lambda_1,\ldots,\lambda_n\}$.
\end{theorem}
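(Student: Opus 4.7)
The plan is to invoke Theorem~\ref{Theorem.Schwarz.matrix.Wall} and then verify the required sign pattern by direct computation. Since by hypothesis every $\Delta_j(p)$ is nonzero, that theorem yields a unique Schwarz matrix $J_n$ of the form~(\ref{main.matrix}) with characteristic polynomial $p(z)=\prod_{i=1}^n(z-\lambda_i)$. The task thus reduces to showing that its entries satisfy $b_0>0$, $b_m>0$ for $1\leqslant m\leqslant n-2\varkappa$, and $b_m<0$ for $n-2\varkappa+1\leqslant m\leqslant n-1$; this is exactly the sign pattern of $S_n$ in~(\ref{Schwarz.matrix.2}) with $a=b_0>0$ and $k=n-2\varkappa$, and uniqueness is inherited directly from Theorem~\ref{Theorem.Schwarz.matrix.Wall}.

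The key computational tool is the telescoping identity
\begin{equation*}
b_0 b_1 \cdots b_m \;=\; \frac{\Delta_{m+1}(p)}{\Delta_{m-1}(p)}, \qquad 0\leqslant m\leqslant n-1,
\end{equation*}
which follows from Wall's formul\ae~(\ref{Formulae.1}) by direct multiplication together with the convention $\Delta_{-1}(p)=\Delta_0(p)=1$. I would then analyse the signs of these consecutive products. Because the negative determinants $\Delta_{n-2\varkappa+2+4i}(p)$ all share the parity of $n$, the subsequence of $\Delta_j(p)$ with parity opposite to $n$ is entirely positive, while the subsequence of parity equal to $n$ reads $+,+,\dots,+$ up through index $n-2\varkappa$ and then alternates $-,+,-,+,\dots$ from index $n-2\varkappa+2$ onward. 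Substituting this pattern into the telescoped identity, one verifies that the sign sequence of $b_0,\,b_0 b_1,\,\dots,\,b_0 b_1 \cdots b_{n-1}$ stays positive through index $n-2\varkappa$ and then flips sign at every subsequent step, so $b_m>0$ for $0\leqslant m\leqslant n-2\varkappa$ and $b_m<0$ for $n-2\varkappa+1\leqslant m\leqslant n-1$, as required.

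The only delicate point is the parity bookkeeping: the ratio $\Delta_{m+1}(p)/\Delta_{m-1}(p)$ involves two Hurwitz determinants of the same parity, and whether that common parity coincides with the parity of $n$ depends on $m$. The cases $n=2l$ and $n=2l+1$ should therefore be treated in parallel, checking in each that the first sign change in the product $b_0\cdots b_m$ occurs precisely at $m=n-2\varkappa+1$ and that alternation persists thereafter. This is the main obstacle of the argument, but it is pure index chasing and requires no tools beyond Wall's formul\ae~(\ref{Formulae.1}) and the sign information supplied by the hypothesis, together with the uniqueness part of Theorem~\ref{Theorem.Schwarz.matrix.Wall}.
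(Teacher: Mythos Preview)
Your proposal is correct and follows essentially the same route as the paper's proof: invoke Theorem~\ref{Theorem.Schwarz.matrix.Wall} for existence and uniqueness of a Schwarz matrix with characteristic polynomial $p$, and then read off the sign pattern of the entries from Wall's formul\ae~(\ref{Formulae.1}) together with the prescribed signs of the Hurwitz determinants. The paper's argument is more terse, simply citing~(\ref{Formulae.1}) and~(\ref{Theorem.my.old.converse.ineq}) without spelling out the telescoping identity $b_0\cdots b_m=\Delta_{m+1}(p)/\Delta_{m-1}(p)$, but your explicit verification of the alternation is the natural way to make that step precise.
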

\begin{proof}
Indeed, by the conditions of the theorem, all the Hurwitz determinants of the polynomial $p$ are nonzero, so according to Theorem~\ref{Theorem.Schwarz.matrix.Wall}, there exists a Schwarz matrix of the form~(\ref{main.matrix}) whose spectrum is $\{\lambda_1,\ldots,\lambda_n\}$. But from the formul\ae~(\ref{Formulae.1}), from the inequalities~(\ref{Theorem.my.old.converse.ineq}) (see also~(\ref{Theorem.my.old.ineq.1})) and from the positivity of all other
Hurwitz determinants~of~$p$, it follows that the sign pattern of the matrix $S_n$ must be as in~(\ref{Schwarz.matrix.2}) with $k=n-2\varkappa$.
\end{proof}

Analogously, one can prove the following theorem.
\begin{theorem}\label{Theorem.my.old.converse.2}
Let $\lambda_1,\ldots,\lambda_n$ be a sequence of complex numbers such that the
polynomial $p(z)=\prod\limits_{i=1}^n(z-\lambda_i)$ is almost generalized Hurwitz
of order $\varkappa$ of type~I such that
$$
\Delta_{n-2\varkappa+1+4i}(p)<0,\qquad i=0,1,\ldots,\left[\frac{\varkappa-1}2\right],
$$
and other $\Delta_j(p)$ are positive. Then there exists a unique Schwarz matrix $S_n$ of the form~(\ref{Schwarz.matrix.2}) with $a>0$ and $k=n-2\varkappa-1$ such that $\sigma(S_n)=\{\lambda_1,\ldots,\lambda_n\}$.
\end{theorem}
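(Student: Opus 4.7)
The plan is to mirror the proof of Theorem~\ref{Theorem.my.old.converse}, replacing the generalized Hurwitz case with the almost generalized Hurwitz case. Because all Hurwitz determinants of $p$ are assumed nonzero, Theorem~\ref{Theorem.Schwarz.matrix.Wall} produces a unique Schwarz matrix $J_n$ of the form~(\ref{main.matrix}) whose characteristic polynomial is~$p$, and hence whose spectrum is $\{\lambda_1,\ldots,\lambda_n\}$. The entire task therefore reduces to showing that the sign pattern of the entries $b_0,b_1,\ldots,b_{n-1}$ of this $J_n$ is exactly the one displayed in~(\ref{Schwarz.matrix.2}) with $a>0$ and $k=n-2\varkappa-1$.

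To extract the signs of the $b_k$'s, I would use Wall's formulae~(\ref{Formulae.1}) directly. The hypothesis $\Delta_1(p)>0$ gives $b_0=a>0$, so the first entry has the sign demanded by~(\ref{Schwarz.matrix.2}). For $k\geqslant 1$, each product $b_{k-1}b_k$ is an explicit ratio of Hurwitz determinants, and since $\Delta_{-1}=\Delta_0=1>0$, induction on $k$ converts the given sign sequence of $\{\Delta_j(p)\}$ into the sign sequence of $\{b_k\}$. The single sign change in the entries of $S_n$ (from $c_1,\ldots,c_k$ positive to $-c_{k+1},\ldots,-c_{n-1}$ negative in the $(i,i+1)$-subdiagonal, i.e.\ the sign of the $b_j$'s flipping at one index) will correspond exactly to the sign changes produced by the negative determinants $\Delta_{n-2\varkappa+1+4i}(p)$.

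The main technical obstacle is to verify that the \emph{specific} indices $j=n-2\varkappa+1+4i$, $i=0,1,\ldots,\left[\tfrac{\varkappa-1}{2}\right]$, appearing in the hypothesis are precisely the indices produced, via~(\ref{Formulae.1}), by a sign change placed at position $k=n-2\varkappa-1$ in~(\ref{Schwarz.matrix.2}). This is a direct but slightly tedious bookkeeping exercise: for the candidate pattern with $a>0$ and $k=n-2\varkappa-1$, compute which consecutive products $\Delta_{k-1}\Delta_k$ in the numerators and denominators of~(\ref{Formulae.1}) are negative, and check that the resulting set of indices coincides with $\{n-2\varkappa+1+4i\}$. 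This is the same index-matching already carried out in the proof of Theorem~\ref{Theorem.my.old} (with~(\ref{Theorem.my.old.ineq.1})) and its converse, Theorem~\ref{Theorem.my.old.converse}; here one simply applies the analysis to the almost generalized Hurwitz case, invoking Theorem~\ref{Theorem.almost.general.Hurwitz.criterion} in place of Theorem~\ref{Theorem.general.Hurwitz.criterion}.

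Finally, uniqueness is automatic: Theorem~\ref{Theorem.Schwarz.matrix.Wall} already gives a unique Schwarz matrix of the form~(\ref{main.matrix}) with the prescribed characteristic polynomial, and the sign analysis above shows that this matrix necessarily has the shape~(\ref{Schwarz.matrix.2}) with $a>0$ and $k=n-2\varkappa-1$. Thus the proof amounts to quoting Theorem~\ref{Theorem.Schwarz.matrix.Wall}, combining Wall's formulae~(\ref{Formulae.1}) with Theorem~\ref{Theorem.almost.general.Hurwitz.criterion}, and performing the index bookkeeping described above.
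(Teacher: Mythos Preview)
Your proposal is correct and follows essentially the same approach as the paper: the paper does not give a separate proof of this theorem but simply says ``Analogously, one can prove the following theorem'' after proving Theorem~\ref{Theorem.my.old.converse}, and the argument there is exactly the one you describe---use Theorem~\ref{Theorem.Schwarz.matrix.Wall} to obtain the unique Schwarz matrix from the nonvanishing Hurwitz determinants, then read off the sign pattern of the $b_k$'s from Wall's formulae~(\ref{Formulae.1}) and the hypothesized signs of the $\Delta_j(p)$. One minor remark: you do not actually need to invoke Theorem~\ref{Theorem.almost.general.Hurwitz.criterion} here, since the signs of all $\Delta_j(p)$ are given directly by hypothesis; the paper's proof of the analogous Theorem~\ref{Theorem.my.old.converse} likewise does not appeal to Theorem~\ref{Theorem.general.Hurwitz.criterion}.
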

\begin{remark}
\emph{Note that the results due to H.~Wall and O.~Holtz (Theorems~\ref{Theorem.Routh.form} and~\ref{Theorem.Holtz}) follow from Theorems~\ref{Theorem.my.old},~\ref{Theorem.my.old.converse}
and~\ref{Theorem.my.old.converse.2} for $k=0$ and for $k=n-1$ (see Remark~\ref{remark.almost.gen}).}
\end{remark}

Finally, we show how to apply Theorem~\ref{Theorem.my.old} to a more particular case. Consider the following matrix studied in~\cite{Bebiano'11}
\begin{equation}\label{Bebiano.matrix}
J_n=
\left[\begin{array}{cccccc}
    a & 1 &  0 &\dots&   0   & 0 \\
    -c_1 & 0 &1 &\dots&   0   & 0 \\
     0  &c_2 & 0 &\dots&   0   & 0 \\
    \vdots&\vdots&\vdots&\ddots&\vdots&\vdots\\
     0  &  0  &  0  &\dots&0& 1\\
     0  &  0  &  0  &\dots&c_{n-1}&0\\
\end{array}\right], \quad a>0, c_j>0.
\end{equation}

By Theorem~\ref{Theorem.my.old}, the characteristic polynomial of this matrix is (almost)
generalized Hurwitz polynomial of order $\varkappa=\left[\frac{n-1}2\right]$ of type~II.
In~\cite{Bebiano'11}, there was posed the problem to find the condition on a sequence of complex
number to be the spectrum of the matrix~(\ref{Bebiano.matrix}). The following theorem solves
the direct and inverse problems for matrices of the form~(\ref{Bebiano.matrix}).
\begin{theorem}\label{Theorem.Bebiano.matrix}
The eigenvalues $\lambda_j$ of the matrix~(\ref{Bebiano.matrix}) are distributed in one of the following ways:
\begin{itemize}
\item[1)] $0<-\lambda_1<\lambda_2<-\lambda_3<\cdots<(-1)^n\lambda_{n-2}$, $\lambda_{n-1}=\overline{\lambda}_{n}\in\mathbb{C}$, and $\mathrm{Re}\lambda_{n}>0$;
\item[2)] $0<\lambda_1\leqslant\lambda_2<-\lambda_3<\lambda_4<-\lambda_5<\cdots<(-1)^n\lambda_{n}$;
\item[3)] for some $k$, $k=1,\ldots,\left[\frac{n-3}2\right]$,\\
$0<-\lambda_1<\lambda_2<\cdots<-\lambda_{2k-1}<\lambda_{2k}\leqslant\lambda_{2k+1}\leqslant\lambda_{2k+2}<\lambda_{2k+3}<\cdots
<(-1)^{n-1}\lambda_{n-1}<(-1)^n\lambda_{n}$;
\item[4)] $0<-\lambda_1<\lambda_2<-\lambda_3<\cdots<(-1)^n\lambda_{n-2}\leqslant\lambda_{n-1}\leqslant\lambda_{n}$,
and $(-1)^n\lambda_{n-2}<\lambda_{n-1}$ if $n=2l+1$.
\end{itemize}

Conversely, let $\lambda_1,\ldots,\lambda_n$ be a sequence of complex numbers satisfying one of the four conditions above, and $\sum\limits_{i=1}^{n}\lambda_i>0$. Then there exists a unique matrix $J_n$ of the form~(\ref{Bebiano.matrix}) such that $\sigma(J_n)=\{\lambda_1,\ldots,\lambda_n\}$.
\end{theorem}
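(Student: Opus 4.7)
The plan is to reduce the theorem to Theorem~\ref{Theorem.my.old} via the auxiliary matrix $-J_n$. Since the characteristic polynomial of a tridiagonal matrix depends only on its diagonal entries and on the products of pairs of off-diagonal entries (cf.\ Remark~\ref{remark.type.II}), the matrix $-J_n$ has the same characteristic polynomial as a Schwarz matrix of the form~(\ref{Schwarz.matrix.2}) with $a>0$ and $k=1$; in terms of the Schwarz coefficients this is the sign pattern $b_0,b_1>0$, $b_2,\ldots,b_{n-1}<0$. Theorem~\ref{Theorem.my.old} (first clause when $n=2l+1$, with $k=2m+1$ and $m=0$; fourth clause when $n=2l$, with $k=2m-1$ and $m=1$) then asserts that the characteristic polynomial of $-J_n$ is generalized Hurwitz of type~I of order $l$, or almost generalized Hurwitz of type~I of order $l-1$, respectively. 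Passing back to $J_n$ via Definitions~\ref{def.general.Hurw.poly.type.II} and~\ref{def.almost.general.Hurw.poly} yields that the characteristic polynomial $p(z)$ of $J_n$ is (almost) generalized Hurwitz of type~II of order $\varkappa=\left[\frac{n-1}2\right]$, which is precisely what was already stated in the paragraph preceding the theorem.

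For the direct part I would enumerate the possible zero distributions using Definition~\ref{def.general.Hurw.poly} applied to the type-I polynomial $P(z)=p(-z)$, or to $zP(z)$ in the almost case (where $\mu_1=0$ is forced). This polynomial has $\varkappa$ (resp.\ $\varkappa+1$) simple nonnegative real zeros $0\leqslant\mu_1<\mu_2<\cdots$; an odd number of real zeros on each interior interval $(-\mu_{j+1},-\mu_j)$; an even number on $(-\mu_1,0)$ (when $\mu_1>0$) and on $(-\infty,-\mu_{\max})$ (with parity fixed by the degree); and all nonreal zeros confined to the open left half-plane. A degree count shows that beyond the one real zero forced on each interior interval by the odd-count condition there remain exactly two extra zeros to place, and they can only go in one of four mutually exclusive ways: (a)~one complex conjugate pair; (b)~two reals in $(-\infty,-\mu_{\max})$; (c)~two reals in $(-\mu_1,0)$ (available only when $\mu_1>0$, i.e.\ $n$ odd); or (d)~two extras on one interior interval $(-\mu_{k+1},-\mu_k)$, turning its count into three. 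Reflecting back to $p$ and relabelling the zeros by increasing absolute value translates (a)--(d) precisely into cases~(1)--(4) of the theorem, with the parameter $k$ in case~(3) being the index of the affected interval.

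For the inverse direction, given $\{\lambda_i\}$ satisfying one of the four conditions together with $\sum_i\lambda_i>0$, I would set $p(z):=\prod_{i=1}^n(z-\lambda_i)$. The listed zero structure directly makes $p$ (almost) generalized Hurwitz of type~II of order $\varkappa$. To invoke Theorem~\ref{Theorem.Schwarz.matrix.Wall} together with Theorem~\ref{Theorem.my.old.converse} or~\ref{Theorem.my.old.converse.2} I must still verify that every $\Delta_j(p)$ is nonzero and that its sign pattern matches that of a Bebiano matrix, namely $\Delta_{4j}(p)>0$ and the remaining $\Delta_j(p)<0$ (computed from Wall's formulas~(\ref{Formulae.1}) and the signs $b_0<0$, $b_1>0$, $b_k<0$ for $k\geqslant 2$). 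The trace hypothesis gives $\Delta_1(p)=-\sum_i\lambda_i<0$; Theorem~\ref{Theorem.general.Hurwitz.criterion} applied to $P$, combined with the standard identity relating $\Delta_j(P)$ to $\Delta_j(p)$ by the alternating-sign negation of the coefficients of $p$, fixes the signs of $\Delta_{n-1}(p),\Delta_{n-3}(p),\ldots$; and the remaining signs are then pinned down inductively by Wall's formulas and the prescribed signs of the $b_k$. The resulting unique Schwarz matrix becomes, after the harmless off-diagonal sign-flip and negation, the required unique matrix $J_n$ of the form~(\ref{Bebiano.matrix}) with spectrum $\{\lambda_i\}$.

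The step I expect to be most delicate is the verification that every $\Delta_j(p)$ is nonzero in every sub-case. This is routine for cases~(1), (2) and~(4), but in the degenerate configurations of case~(3) where the cluster $\lambda_{2k}\leqslant\lambda_{2k+1}\leqslant\lambda_{2k+2}$ is allowed to collapse to coincident values, some intermediate $\Delta_j(p)$ could a priori vanish, and a careful local analysis will be required to rule this out uniformly so that Wall's reconstruction applies.
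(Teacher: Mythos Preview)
Your direct half is essentially the paper's argument: reduce to $-J_n$ via Remark~\ref{remark.type.II}, apply Theorem~\ref{Theorem.my.old} with $k=1$, and read off the four zero configurations from Definitions~\ref{def.general.Hurw.poly}, \ref{def.general.Hurw.poly.type.II}, \ref{def.almost.general.Hurw.poly}. Your degree-count enumeration of the four cases is a bit more explicit than the paper's one-line appeal to the definitions, but the content is the same.

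The inverse half, however, has a genuine gap. You correctly isolate the crux: one must show that \emph{every} $\Delta_j(p)$ is nonzero (and has the right sign) before Wall's reconstruction can be invoked. From Theorem~\ref{Theorem.general.Hurwitz.criterion} applied to $q(z)=p(-z)$ you legitimately obtain strict positivity of $\Delta_2(q),\Delta_4(q),\ldots$. But your proposed step for the other parity---``pin them down inductively by Wall's formul\ae~(\ref{Formulae.1}) and the prescribed signs of the $b_k$''---is circular: the $b_k$ do not exist until you already know all $\Delta_j\neq0$, and~(\ref{Formulae.1}) runs from the $\Delta_j$ to the $b_k$, not the other way. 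This is exactly why you end up worrying about degenerate clusters in case~(3): without a non-circular argument you cannot exclude a vanishing $\Delta_j$ there.

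The ingredient you are missing is the \emph{second} assertion of Theorem~\ref{Theorem.general.Hurwitz.criterion}, the order formula~(\ref{general.Hurwitz.poly.10}). For $n=2l+1$ one has $\varkappa=l$, and the trace hypothesis gives $\Delta_1(q)=\sum_i\lambda_i>0$, so
\[
l \;=\; V^{F}\bigl(1,\Delta_1(q),\Delta_3(q),\ldots,\Delta_{2l+1}(q)\bigr)
\;=\; V^{F}\bigl(\Delta_1(q),\Delta_3(q),\ldots,\Delta_{2l+1}(q)\bigr).
\]
This is a sequence of $l+1$ numbers with the maximal possible number $l$ of sign changes. Under the Frobenius rule a single vanishing term forces a repeated sign at that step and hence loses at least one sign change; therefore every $\Delta_{2i-1}(q)$ is nonzero and the signs alternate, $(-1)^{i-1}\Delta_{2i-1}(q)>0$. (The even-$n$ case is analogous via Theorem~\ref{Theorem.almost.general.Hurwitz.criterion}.) This argument is uniform over all four cases, including the coincident-eigenvalue configurations in~(3), so no separate ``local analysis'' is needed. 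With all $\Delta_j(q)\neq0$ in hand you may now apply Theorem~\ref{Theorem.Schwarz.matrix.Wall}, read off the $b_k$ from~(\ref{Formulae.1}), and check their signs---but only in that order.
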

\begin{proof}
As we already mentioned, by Theorem~\ref{Theorem.my.old}, the characteristic polynomial $p$ of the matrix~(\ref{Bebiano.matrix}) is generalized Hurwitz of order $\varkappa=\left[\frac{n-1}2\right]$ of type~II (if $n=2l+1$) or almost generalized Hurwitz of order $\varkappa=\left[\frac{n-1}2\right]$ of type~II (if $n=2l$). According to Definitions~\ref{def.almost.general.Hurw.poly},~\ref{def.general.Hurw.poly}
and~\ref{def.general.Hurw.poly.type.II}, the eigenvalues of the matrix~$J_n$ are distributed
in one of the four ways described in the statement of the theorem. Additionally, from the form
of the matrix~(\ref{Bebiano.matrix}) it follows that $\sum\limits_{i=1}^n\lambda_i=a>0$.

Conversely, let $\lambda_1,\ldots,\lambda_n$ be a sequence of complex numbers satisfying one of the four conditions above, and $\sum\limits_{i=1}^{n}\lambda_i>0$. Then the polynomial $p(z)=\prod\limits_{i=1}^n(z-\lambda_i)$ is generalized Hurwitz of order $\varkappa=\left[\frac{n-1}2\right]$ of type~II (if $n=2l+1$) or almost generalized Hurwitz of order $\varkappa=\left[\frac{n-1}2\right]$ of type~II (if $n=2l$) by Definitions~\ref{def.almost.general.Hurw.poly},~\ref{def.general.Hurw.poly}
and~\ref{def.general.Hurw.poly.type.II}. It is left to prove that $p$ satisfies
the inequalities~(\ref{Hurw.det.ineq}).

Let $n=2l+1$. Since $q(z):=p(-z)$ is generalized Hurwitz of type~I of order
$\varkappa=\left[\frac{n-1}2\right]=l$ by Definition~\ref{def.general.Hurw.poly.type.II}, we have
\begin{equation}\label{Theorem.Bebiano.matrix.proof.1}
\Delta_2(q)>0,\ \Delta_4(q)>0,\ \ldots,\Delta_{2l}(q)>0.
\end{equation}
and
$$
l-1=V^{F}(1,\Delta_1(q),\Delta_3(q),\ldots,\Delta_{2l+1}(q)).
$$
But $\Delta_1(q)=-\sum\limits_{i=1}^{n}(-\lambda_i)>0$, so $V^{F}(1,\Delta_1(q))=0$ and therefore we have
$$
l-1=V^{F}(\Delta_1(q),\Delta_2(q),\ldots,\Delta_{2l+1}(q)).
$$
Now the Frobenius rule of sign (see comments to Theorem~\ref{Theorem.general.Hurwitz.criterion} on p.~\pageref{Frobenius}) requires all the determinants $\Delta_3(q)$, $\Delta_5(q)$, \ldots,
$\Delta_{2l+1}(q)$ to be nonzero and satisfying the inequalities
\begin{equation}\label{Theorem.Bebiano.matrix.proof.2}
(-1)^{i-1}\Delta_{2i-1}(q)>0,\qquad i=1,\ldots,l+1.
\end{equation}
From the inequalities~(\ref{Theorem.Bebiano.matrix.proof.1})--(\ref{Theorem.Bebiano.matrix.proof.2}),
Theorem~\ref{Theorem.gen.Hurw.Schwarz.odd}, and the formul\ae~(\ref{Formulae.1}) we obtain that there exists a unique matrix of the form
$$
\left[\begin{array}{cccccc}
    -a & 1 &  0 &\dots&   0   & 0 \\
    -c_1 & 0 &1 &\dots&   0   & 0 \\
     0  &c_2 & 0 &\dots&   0   & 0 \\
    \vdots&\vdots&\vdots&\ddots&\vdots&\vdots\\
     0  &  0  &  0  &\dots&0& 1\\
     0  &  0  &  0  &\dots&c_{n-1}&0\\
\end{array}\right], \quad a>0, c_j>0.
$$
whose characteristic polynomial is $q$. Now Remark~\ref{remark.type.II} gives us the assertion of the theorem for $n=2l+1$. The case $n=2l$ can be established analogously.
\end{proof}

Note that the additional condition $\sum\limits_{i=1}^{n}\lambda_i>0$ is substantial for solution of the inverse problem for the matrix~(\ref{Bebiano.matrix}). If this number is negative, then the matrix must have another sign pattern. But if this number is zero, the inverse problem has no solution.

We finish by noting that using results of Section~\ref{section:spectral.problems} one can find more examples of sign patterns of Schwarz matrices with (almost) generalized Hurwitz characteristic polynomials. At least, given a Schwarz matrix, one can always say if its characteristic polynomial is (almost) generalized Hurwitz or not.

\bigskip
{\bf Acknowledgment.} The author thanks N.~Bebiano and C.~da Fonseca for helpful discussions.



\begin{thebibliography}{1}

\Russian

%
\bibitem{Bebiano'11}
N.~Bebiano and J.~da Provid$\hat{\mathrm{e}}$ncia.
\newblock  Inverse problems for pseudo-Jacobi matrices:
           existence and uniqueness results.
\newblock  \textit{Inverse Problems}, 27(2):025005, 2011, DOI:10.1088/0266-5611/27/2/025005.
%
%
\bibitem{Chen_Chu}
C.\,Chen and H.\,Chu. A matrix for evaluating Schwarz's form,
\newblock  \textit{IEEE Trans. Automatic Control}, 11(2):303--305, 1966.
%
\bibitem{Datta1}
B.\,Datta. A constructive method for finding the Schwarz form of a Hessenberg matrix,
\newblock  \textit{IEEE Trans. Automatic Control}, 19(5):616--617, 1974.
%
\bibitem{Datta2}
B.\,Datta. On the similarity between a matrix and its Routh canonical form,
\newblock  \textit{IEEE Trans. Automatic Control}, 20(2):273--274, 1975.
%
\bibitem{Datta3}
B.\,Datta. An inertia theorem for the Schwarz matrix,
\newblock  \textit{IEEE Trans. Automatic Control}, 20(2):274, 1975.
%
\bibitem{Datta4}
B.\,Datta. On the computation of Routh canonical form of a Hessenberg matrix,
\newblock  \textit{IEEE Trans. Automatic Control}, 20(4):571--572, 1975.
%
\bibitem{Elsner_Hersh}
L.\,Elsner and D.\,Hershkowitz. On the spectra of close-to-Schwarz matrices,
\newblock  \textit{Linear Algebra Appl.}, 363:81--88, 2003.
%
\bibitem{Frobenius}
G.\,Frobenius.
\newblock  \"Uber das {T}r\"agheitsgesetz der
           quadratischen {F}ormen.
\newblock  \textit{Sitz.-Ber. Acad. Wiss. Phys.-Math. Klasse, Berlin}, 241--256; 407--431, 1953.
%

\bibitem{KreinGantmaher}
{\rm Ф.\,Р.\,Гантмахер, М.\,Г.\,Крейн.\/} \textit{Осцилляционные
матрицы и ядра и малые колебания механических систем.} (Russian),
Москва-Ленинград, ГТТИ, 1941.

\vspace{1mm}

\noindent English transl.: \textit{Oscillation matrices and
kernels and small vibrations of mechanical systems.} AMS Chelsea
Publishing, Providence, RI, revised edition, 2002. Translation
based on the 1941 Russian original. Edited and with a preface by
Alex Eremenko.
%
\bibitem{Gantmakher.1}
F.~R.~Gantmacher.
\newblock  \textit{The theory of matrices. Vol. 1.}
\newblock  Translated from the Russian by K. A. Hirsch. Reprint of the 1959
           translation. AMS Chelsea Publishing, Providence, RI,  1998.
%
\bibitem{Gantmakher}
F.~R.~Gantmacher.
\newblock  \textit{The theory of matrices. Vol. 2.}
\newblock  Translated by K. A. Hirsch Chelsea Publishing Co., New York,  1959.
%
\bibitem{H4}
O.\,Holtz.
\newblock  The inverse eigenvalue problem for symmetric
           anti-bidiagonal matrices,
\newblock  \textit{Linear Algebra Appl.}, 408:268--274, 2005.
%
\bibitem{Holtz_Tyaglov}
O.~Holtz and M.~Tyaglov.
\newblock  Structured matrices, continued fractions, and root localization of polynomials.
\newblock  \textit{arXiv: 0912.4703}, to appear in \textit{SIAM Review}.
%
\bibitem{Puri_Weygandt}
N.\,Puri and C.\,Weygandt. Second method of Liapunov and Routh's canonical form,
\newblock  \textit{J. Franklin Inst.}, 276(5):365--384, 1963.
%
\bibitem{Schwarz}
H.~R.~Schwarz.
\newblock  Ein Verfahren zur Stabilit\"atsfrage bei Matrizen-Eigenwertproblemen,
\newblock  \emph{Zeitschrift f\"ur angew. Math. Phys.}, 7(6):473--500, 1956.
%
\bibitem{Tyaglov.gen.Hurw}
M.~Tyaglov.
\newblock  Generalized Hurwitz polynomials.
\newblock  \textit{arXiv:1005.3032}, to appear in \textit{Complex analysis and Operator Theory}.
%
\bibitem{Wall'45}
H.~S.~Wall.
\newblock  Polynomials whose zeros have negative real parts,
\newblock  \emph{Amer. Math. Mothly}, 52(6):308--322, 1945.
%
\bibitem{Wall}
H.~S.~Wall.
\newblock  \textit{Analytic Theory of Continued Fractions.}
\newblock  D. Van Nostrand Company, Inc., New York, 1948.
%
\end{thebibliography}
\end{document}